\documentclass[12pt,reqno]{amsart}
\usepackage{amsfonts, amsbsy, amsmath, amssymb, mathtools, amsthm,ragged2e}
\usepackage{amscd}
\usepackage{color,enumerate}
\usepackage{breqn}
\usepackage{bm}
\usepackage{pgfplots}
\pgfplotsset{compat=1.15}
\usepackage{tikz}
\usepackage{tikz-cd}
\usepackage{subcaption}
\usetikzlibrary{arrows}
\usepackage{verbatim}
\usepackage{graphicx}
\usetikzlibrary{matrix,calc}
\usepackage{caption}
\usepackage{hyperref}
\usepackage{float}
\usepackage[mathscr]{euscript}
\usepackage{mathrsfs}
\usepackage[margin=2.5cm]{geometry}
\usepackage{epsfig}

\newtheorem{thm}{Theorem}[section]
\newtheorem{lemma}[thm]{Lemma}

\newtheorem{prop}[thm]{Proposition}
\newtheorem{conj}[thm]{Conjecture}
\newtheorem{thm-con}[thm]{Theorem-Conjecture}
\numberwithin{equation}{section}

\theoremstyle{definition}
\newtheorem{defn}[thm]{Definition}
\newtheorem{rmk}[thm]{Remark}

\newtheorem{exmp}[thm]{Example}

\newtheorem{setup}[thm]{Set-up}

\DeclareMathOperator{\supp}{supp}
\DeclareMathOperator{\pol}{pol}

\DeclareMathOperator{\Ass}{Ass}
\DeclareMathOperator{\MinAss}{MinAss}

\newcommand{\x}{\mathbf{x}}

\newcommand{\G}{\mathcal{G}}
\newcommand{\F}{\mathcal{F}}
\newcommand{\la}{\mathcal{L}}

\newcommand{\Cla}{\mathcal{C}}

\newcommand{\D}{\Delta}

\def\l{\left(}
\def\r{\right)}
\def\K{\mathbb{K}}

\begin{document}
\title[The Mendez--Pinto--Villarreal Conjecture]{The Mendez--Pinto--Villarreal Conjecture for some classes of monomial ideals}

\author[Paromita Bordoloi]{Paromita Bordoloi}
\email{2022rma0026@iitjammu.ac.in}
\address{Department of Mathematics, Indian Institute of Technology Jammu, J\&K, India - 181221.}

\author[Kanoy Kumar Das]{Kanoy Kumar Das}
\email{kanoydas0296@gmail.com}
\address{Indian Statistical Institute Kolkata, Stat-Math Unit, 203 B. T. Road, Baranagar, Kolkata 700108, India}

\author[Rajiv Kumar]{Rajiv Kumar}
\email{rajiv.kumar@iitjammu.ac.in}
\address{Department of Mathematics, Indian Institute of Technology Jammu, J\&K, India - 181221.}

\date{\today}

\subjclass[2020]{Primary 13C70, 05E40; Secondary 13A70, 05C22, 05C25}
\keywords{simplicial complexes, symbolic power, Simis ideal, primary decomposition}

\begin{abstract}
Characterizing when the symbolic and ordinary powers of an ideal coincide is a central problem in commutative algebra, and ideals satisfying this property are called \emph{Simis ideals}. In this article, we investigate the Simis property of monomial ideals by studying the recent conjecture of Mendez, Pinto, and Villarreal on monomial ideals with minimal irreducible decomposition. Let $I$ be a monomial ideal, and let $\mathcal{F}(I)$ denote the collection of supports of the minimal generators of $I$. Assuming that $\mathcal{F}(I)=\mathcal{F}(\sqrt{I})$, we prove that if $I$ admits more than one minimal generator with a fixed support, then it is not Simis. Using this reduction, we establish the Mendez--Pinto--Villarreal conjecture for two broad classes of monomial ideals, namely support-$3$ monomial ideals and monomial ideals whose associated simplicial complexes are simplicial forests. Finally, we study the Cohen--Macaulay property of monomial ideals whose associated simplicial complexes are grafted and satisfy $\mathcal{F}(I)=\mathcal{F}(\sqrt{I})$.
\end{abstract}

\maketitle

\section{Introduction and Preliminaries}\label{Intro_Pre}

Let $S=\K[x_1,\ldots , x_n]$ be a polynomial ring over a field $\K$, and let $I\subseteq S$ be an ideal. For any integer $s \geq 1$, the $s$-th symbolic power of $I$, denoted by $I^{(s)}$, is defined as
\[
I^{(s)} = \bigcap_{P \in \MinAss(I)} (I^s S_P \cap S),
\]
where $\MinAss(I)$ denotes the set of minimal primes of $I$. Ideals for which the equality $I^{(s)}=I^s$ holds for all $s\geq 1$ are known as \emph{Simis ideals}, named in honor of Aron Simis for his foundational contributions to the study of symbolic powers \cite{mendez2024symbolic, SimisRamos2022, SimisTrung1988, SimisUlrich2000,  SimisVasconcelosVillarreal1994}. The problem of characterizing Simis ideals has attracted considerable interest, yet a complete classification, even for restricted classes of ideals remains highly nontrivial.

In this article, we study the Simis property for monomial ideals. For square-free monomial ideals, the characterization of Simis ideals is intimately connected to the packing property in combinatorial optimization through the Conforti-Cornu\'ejols conjecture \cite{Packing, GRV2009}. A classical result of Simis, Vasconcelos, and Villarreal \cite{SimisVasconcelosVillarreal1994} provides such a characterization for the edge ideals of graphs. Since then, the Simis property has been investigated across a variety of other classes of monomial ideals; these include edge ideals of weighted oriented graphs \cite{GMV2024, MandalPradhan2021}, certain generalized edge ideals \cite{das2024equality}, cover ideals of graphs \cite{GRV2005, GRV2009}, $3$-path ideals of graphs \cite{alilooee2021packing}, matroidal ideals \cite{ficarra2025symbolic}, and dual of $t$-connected ideals \cite{BhardwajDasSawant2026}, among others.  For monomial ideals that are not necessarily square-free, embedded associated primes may occur, and naturally this leads to an alternative definition of symbolic powers involving the full set of associated primes. This formulation of symbolic powers has also been studied extensively in the literature. We refer the interested reader to \cite{article, dao2015symbolic, HochsterHuneke2002} and the references therein for further background and developments.

When $I$ is a monomial ideal, symbolic powers admit a simpler description in terms of the primary components of $I$. Let $I=\bigcap_{P\in \Ass(I)}Q(P)$ be the minimal primary decomposition of $I$, where $Q(P)$ denotes the $P$-primary component  of $I$ corresponding to $P\in \Ass(I)$. Then
\[
I^{(s)}=\bigcap_{P\in \MinAss(I)}Q(P)^s,
\]
see \cite[Lemma 2]{gimenez2018symbolic}. An ideal $Q\subseteq S$ is called \emph{irreducible} if it cannot be expressed as an intersection of two ideals properly containing it. It follows that, after a suitable permutation of the variables, every irreducible monomial ideal is of the form $\left( x_i^{a_i}\mid i\in [r],\ r\leq n,\ a_i\geq 1\right)$. In particular, irreducible monomial ideals are primary, and every monomial ideal admits a unique irredundant irreducible decomposition. Although the irreducible decomposition of a monomial ideal need not be minimal in general, there are important classes of monomial ideals for which the irreducible decomposition is minimal, including square-free monomial ideals and edge ideals of weighted oriented graphs \cite{PRT2019}. In this article, we investigate the Simis property of monomial ideals with minimal irreducible decomposition through a recent conjecture of Mendez, Pinto, and Villarreal.

\begin{conj}\cite[Conjecture 5.7]{mendez2024symbolic}\label{conj}
    Let $I$ be a monomial ideal without any embedded associated primes. If the irreducible decomposition of $I$ is minimal, and $I$ is a Simis ideal, then there is a Simis square-free monomial ideal $J$ and a standard linear weighting $w$ such that $I=J_w$.   
\end{conj}
In a recent article, authors verified this conjecture for support-$2$ monomial ideals \cite[Theorem 2.3]{Bordoloi2026}. The conjecture is also known to hold for certain classes of monomial ideals obtained by imposing suitable conditions on the generating sets of their associated primes \cite[Theorem 4.7]{kumar2025waldschmidt}. In this article, we further investigate the conjecture for the classes of support-$3$ monomial ideals and monomial ideals associated to simplicial forests.

We now recall the necessary definitions and fix the notation that is used throughout the article. Let $I\subseteq S$ be a monomial ideal. For simplicity, we write a monomial in $S$ as $\x^{\mathbf{a}}=x_1^{a_1}\cdots x_n^{a_n}$, where $\mathbf{a}=(a_1,\ldots,a_n)\in \mathbb{N}^n$. The \emph{support} of the monomial $\x^{\mathbf{a}}$ is defined by $\supp(\x^{\mathbf{a}})\coloneqq \{x_i\mid a_i\neq 0\}$. Let $\G(I)$ denote the unique minimal monomial generating set of $I$. We define
\[
\mathcal{F}(I)\coloneqq \{\supp(\x^{\mathbf{a}})\mid \x^{\mathbf{a}}\in \G(I)\}.
\]
Throughout this article, we assume that $\mathcal{F}(I)=\mathcal{F}(\sqrt{I})$. Equivalently, there is no proper containment among the supports of the minimal generators of $I$. The interaction between combinatorics and commutative algebra has led to substantial developments in the study of monomial ideals. In particular, combinatorial objects such as graphs, hypergraphs, and simplicial complexes often encode important algebraic properties of square-free monomial ideals. To this end, the collection $\mathcal{F}(I)$ can naturally be viewed as the facet set of a simplicial complex on the vertex set $[n]=\{1,\ldots,n\}$. We denote this simplicial complex by $\Delta(I)$ and write $\mathcal{F}(\Delta(I))$ for the set of facets. Thus, $\mathcal{F}(\Delta(I))=\mathcal{F}(I).$ We call $\Delta(I)$ the simplicial complex associated to $I$.

A convenient framework for studying monomial ideals is provided by weighted monomial ideals (see \cite[Section 4]{mendez2024symbolic}). In this setting, for a monomial $\x^{\mathbf{a}}=x_1^{a_1}\cdots x_n^{a_n}\in S$, the exponent $a_i$ of the variable $x_i$ is called the \emph{weight} of $x_i$ in $\x^{\mathbf{a}}$. A variable $x_i$ is said to have \emph{uniform weight} (with respect to $I$) if all minimal generators of $I$ whose supports contain $x_i$ have the same weight. We say that $I$ has a \emph{uniform weighting}, or equivalently a \emph{standard linear weighting} (see \cite{mendez2024symbolic}), if every variable has uniform weight with respect to $I$.

Let $I\subseteq S$ be a monomial ideal satisfying $\mathcal{F}(I)=\mathcal{F}(\sqrt{I})$. Write $\mathcal{G}(I)=\{\x^{\mathbf{a}_1},\ldots,\x^{\mathbf{a}_m}\}$, where $\mathbf{a}_i=(a_{i,1},\ldots,a_{i,n})\in \mathbb{N}^n$ for $1\leq i\leq m$. For each $F\in \mathcal{F}(I)$, let
\[
\mathcal{G}(I,F)\coloneqq \{\x^{\mathbf{a}}\in \mathcal{G}(I)\mid \supp(\x^{\mathbf{a}})=F\}.
\]

\begin{enumerate}
    \item Define $\alpha_I(F)\coloneqq |\mathcal{G}(I,F)|$. Thus, $\alpha_I(F)$ denotes the number of minimal monomial generators of $I$ whose support is $F$.
    
    \item Let $F\in \mathcal{F}(I)$ and suppose that $\mathcal{G}(I,F)=\{\x^{\mathbf{a}_1},\ldots,\x^{\mathbf{a}_r}\}$, where $\mathbf{a}_t=(a_{t,1},\ldots,a_{t,n})\in \mathbb{N}^n$ for $1\leq t\leq r$. For $x_i\in F$, we define $w(x_i,F,t)\coloneqq a_{t,i}$, namely, the exponent of $x_i$ in the monomial $\x^{\mathbf{a}_t}$. In the case $\alpha_I(F)=1$, we simply write $w(x_i,F)$ instead of ~$w(x_i,F,1)$.
    
    \item For $x_i\in F$, let $\mu_i(F)$ denote the maximum exponent of $x_i$ among the monomials in ~$\mathcal{G}(I,F)$.
    
    \item For $x_i\in F$, let $\nu_i(F)$ denote the minimum exponent of $x_i$ among the monomials in ~$\mathcal{G}(I,F)$.
\end{enumerate}

\medskip
We now summarize the main results of the article. We begin our study of the Mendez--Pinto--Villarreal conjecture (Conjecture~\ref{conj}) by establishing a key reduction. In Lemma~\ref{lem:Support_gen}, we show that, to study the conjecture for monomial ideals $I$ satisfying $\F(I)=\F(\sqrt{I})$, it suffices to consider the case where $\alpha_I(F)=1$ for every $F\in\F(I)$. This reduction plays a crucial role in establishing the conjecture for two important classes of monomial ideals.

\medskip
\noindent
\textbf{Theorems~\ref{thm:simpl_tree_conj} and \ref{support-3}.}
Let $I\subseteq S$ be a monomial ideal. Then Conjecture~\ref{conj} holds in each of the following cases:
\begin{enumerate}
    \item $\mathcal{F}(I)=\mathcal{F}(\sqrt{I})$ and $\Delta(I)$ is a simplicial forest;
    
    \item $I$ is a support-$3$ monomial ideal.

\end{enumerate}

The proofs of the above results rely on several technical lemmas. In the case of support-$3$ monomial ideals, in order to establish the existence of a standard linear weighting, we analyze two separate situations. The first case, treated in Lemma~\ref{Lem:Support3Int_2}, considers the situation where two distinct supports intersect in two vertices. The second case, addressed in Lemma~\ref{lem:supp3_int1}, deals with the case where the intersection consists of exactly one vertex.

\medskip
Finally, we study Cohen--Macaulay property of monomial ideals $I$ whose associated simplicial complex $\Delta(I)$ is a grafted simplicial complex (see Definition~\ref{Grafting_Defn}).

\medskip
\noindent \textbf{Theorem~\ref{Grafting_CM}.}
Let $I \subseteq S=\K[x_1, \ldots, x_m,y_1, \ldots,y_M]$ be a monomial ideal satisfying $\mathcal{F}(I)=\mathcal{F}(\sqrt{I})$, and suppose that $\D(I)$ is a grafting of a simplicial complex $\D'$. Let $J = I \cap \K[x_1, \ldots, x_m]$, and assume that $J$ admits a standard linear weighting. Then $S/I$ is Cohen--Macaulay if and only if the following conditions hold: 
    \begin{enumerate}[\rm(i)]
        \item $\alpha_I(F)=1$ for every $F\in \la(\D(I))$;
        \item for every $F\in \la(\D(I))$, $G\in \D'$, and every vertex $x_i\in F\cap G$, $w(x_i,F)\ge w(x_i,G)$.
    \end{enumerate}

\medskip
The remainder of the paper is organized as follows. In Section~\ref{sec: MPV conj for Simplicial trees}, we establish Conjecture~\ref{conj} for monomial ideals whose associated simplicial complexes are simplicial forests and investigate the Cohen--Macaulay property of monomial ideals whose associated simplicial complexes are grafted. Section~\ref{sec:supp3} is devoted to proving Conjecture~\ref{conj} for support-$3$ monomial ideals.

\section*{Acknowledgements}
The authors acknowledge the use of the computer algebra system Macaulay2 \cite{M2} and the online platform SageMath for carrying out several computations.

\section{Monomial Ideals Associated to Simplicial Forests}\label{sec: MPV conj for Simplicial trees}

In this section, we establish the Mendez--Pinto--Villarreal Conjecture for monomial ideals associated to simplicial forests. Throughout this section, let $I\subseteq S$ be a monomial ideal satisfying $\mathcal{F}(I)=\mathcal{F}(\sqrt{I})$. Although this condition excludes proper containment among supports of minimal generators, distinct generators may still share the same support. Indeed, Simis monomial ideals with this property exist (see \cite[Example~4.7]{Bordoloi2026}). The first key result of this section shows that, under the assumptions of the Conjecture, each support can correspond to at most one minimal monomial generator. This reduction is fundamental to the proofs of the subsequent results in this article.

\begin{lemma}\label{lem:Support_gen}
    Let $I\subseteq S$ be a monomial ideal such that $\mathcal{F}(I)=\mathcal{F}(\sqrt{I})$ and the irreducible decomposition of $I$ is minimal. If $\alpha_I(F) \ge 2$ for some $F \in \F(I)$, then $I^{(2)} \ne I^2$.
\end{lemma}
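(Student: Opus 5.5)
The plan is to exhibit an explicit monomial $m$ lying in $I^{(2)}$ but not in $I^2$. The candidate $m$ is built from the two generators sharing the support $F$. Fix $F\in\F(I)$ with two distinct minimal generators $\x^{\mathbf a},\x^{\mathbf b}\in\G(I,F)$. Since neither divides the other and both have support $F$, there are indices $i\neq k$ in $F$ with $a_i>b_i$ and $a_k<b_k$. I will use the description $I^{(2)}=\bigcap_{P\in\MinAss(I)}Q(P)^2$ from \cite[Lemma 2]{gimenez2018symbolic}. Minimality of the irreducible decomposition lets me write each component as $Q(P)=(x_j^{q_j}\mid x_j\in P)$, one irreducible component per associated prime. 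A monomial $\x^{\mathbf c}$ lies in $Q(P)^2$ if and only if $\sum_{x_j\in P}\lfloor c_j/q_j\rfloor\ge 2$.

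The candidate is $m=\x^{\mathbf a+\mathbf b-\mathbf e_i}$, i.e. the product of the two generators with one factor of $x_i$ removed, where $i$ is chosen so that $a_i>b_i$. This is modelled on the test case $I=(x^2y,xy^2)=(x)\cap(y)\cap(x^2,y^2)$. There $I^{(2)}=(x^2)\cap(y^2)=(x^2y^2)$, and $x^2y^3\in I^{(2)}\setminus I^2$, since $I^2=(x^4y^2,x^3y^3,x^2y^4)$. If this choice of $m$ fails in general, the fallback is $m=\x^{2(\mathbf a\vee\mathbf b)}$ divided by a suitable variable, or a minimal monomial of $I^{(2)}$ dividing $\x^{\mathbf a+\mathbf b}$ properly.

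The proof then has two steps.

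\textbf{Step 1: $m\in Q(P)^2$ for every minimal prime $P$.} The minimal primes of $I$ are the minimal vertex covers of $\Delta(I)$. Because $\F(I)=\F(\sqrt I)$, every such $P$ meets $F$. Both $\x^{\mathbf a}$ and $\x^{\mathbf b}$ lie in $Q(P)$, so there are $j,l\in P\cap F$ with $a_j\ge q_j$ and $b_l\ge q_l$.
\begin{itemize}
\item If these witnesses can be chosen to avoid $i$, or with $j\ne l$, then lowering $x_i$ by one does not destroy the count $\ge 2$.
\item The delicate case is when the only witnesses are $j=l=i$. Then I need $a_i+b_i-1\ge 2q_i$, which follows from $a_i\ge b_i+1$ and $b_i\ge q_i$. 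This is exactly why $i$ is chosen with $a_i>b_i$.
\end{itemize}

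\textbf{Step 2: $m\notin I^2$.} Suppose $\x^{\mathbf u}\x^{\mathbf v}\mid m$ with $\x^{\mathbf u},\x^{\mathbf v}\in\G(I)$. The supports of $\mathbf u$ and $\mathbf v$ lie inside $F$, and since $\F(I)=\F(\sqrt I)$ has no proper containments, both supports equal $F$; so $\x^{\mathbf u},\x^{\mathbf v}\in\G(I,F)$. One then has to rule out $\mathbf u+\mathbf v\le\mathbf a+\mathbf b-\mathbf e_i$ using minimality of the generators in $\G(I,F)$.

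I expect Step 2 to be the main obstacle. With more than two generators on $F$, a different pair could conceivably fit under $m$. To handle this, I would first choose the pair $\mathbf a,\mathbf b$ and the index $i$ extremally, for instance with $a_i=\mu_i(F)$ and $b_i=\nu_i(F)$ among generators realising the extremes. Then I would compare $x_i$-exponents: any pair $\mathbf u,\mathbf v$ from $\G(I,F)$ dividing $m$ would force $u_i+v_i\le\mu_i(F)+\nu_i(F)-1$. Combining this with coordinatewise minimality should produce a generator properly dividing another, a contradiction. If this comparison does not close, I would instead use the minimality of the irreducible decomposition again. The idea is to find a component $Q(P)$ with $P\cap F=\{x_i\}$ that separates $\mathbf a$ from $\mathbf b$, and adjust $m$ so that its $x_i$-exponent falls strictly below what any product of two generators requires.
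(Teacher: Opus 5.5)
Your Step~1 is correct. In the delicate case $j=l=i$ you get $a_i+b_i-1\ge 2b_i\ge 2q_i$, and the other cases are immediate.

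The gap is Step~2. You leave it open, and the extremal rule you propose there ($a_i=\mu_i(F)$, $b_i=\nu_i(F)$) does not work, even under all hypotheses of the lemma. Take $I=(x^3y^2z,\ x^2yz^2,\ xy^2z^2,\ xyz^3)\subseteq\K[x,y,z]$. One checks that
$I=(x)\cap(y)\cap(z)\cap(x^3,z^2)\cap(y^2,z^2)\cap(x^2,y^2,z^3)$.
This decomposition is irredundant with pairwise distinct radicals, hence minimal. Also $\F(I)=\F(\sqrt I)=\{\{x,y,z\}\}$ and $\alpha_I(F)=4$. For $i=x$, the generator $\x^{\mathbf a}=x^3y^2z$ realises $\mu_x(F)=3$ and $\x^{\mathbf b}=xyz^3$ realises $\nu_x(F)=1$. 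Yet
$m=\x^{\mathbf a+\mathbf b-\mathbf e_x}=x^3y^3z^4=(x^2yz^2)(xy^2z^2)\in I^2$.
So the bound $u_i+v_i\le\mu_i(F)+\nu_i(F)-1$, together with coordinatewise minimality, cannot yield a contradiction in general. Your fallback $\x^{2(\mathbf a\vee\mathbf b)}/x_j$ already fails for $(x^2y,xy^2)$, where both choices are divisible by $\x^{\mathbf a+\mathbf b}$.

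The missing idea is to select the pair by total degree rather than by a single exponent, which is what the paper does. Let $\x^{\mathbf a},\x^{\mathbf b}\in\G(I,F)$ be two generators of least degree, i.e.\ $\deg\x^{\mathbf a}\le\deg\x^{\mathbf b}\le\deg\x^{\mathbf c}$ for every other $\x^{\mathbf c}\in\G(I,F)$. Take any $i$ with $a_i>b_i$. Then:
\begin{enumerate}
\item $\deg m=\deg\x^{\mathbf a}+\deg\x^{\mathbf b}-1$;
\item any $\x^{\mathbf u},\x^{\mathbf v}\in\G(I,F)$ with $\{\mathbf u,\mathbf v\}\neq\{\mathbf a,\mathbf a\}$ satisfy $\deg\x^{\mathbf u}+\deg\x^{\mathbf v}\ge\deg\x^{\mathbf a}+\deg\x^{\mathbf b}>\deg m$;
\item $2\mathbf a\le\mathbf a+\mathbf b-\mathbf e_i$ would force $\x^{\mathbf a}\mid\x^{\mathbf b}$, which is impossible.
\end{enumerate}
Combined with your (correct) reduction to $\G(I,F)$, this gives $m\notin I^2$.

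The paper runs exactly this degree argument for the smaller monomial $f=\prod_j x_j^{\max\{2\min(a_j,b_j),\,\max(a_j,b_j)\}}$, which divides your $m$. So with the degree-minimal pair your candidate works as well; with your extremal choice it does not.
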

\begin{proof}
    We construct a monomial $f\in I^{(2)}\setminus I^2$. Without loss of generality, assume that $F = \{x_1,\ldots,x_d\}$. Write $\mathcal{G}(I, F)=\{\x^{\mathbf{b}_i}\mid 1\leq i\leq \alpha_I(F)\}$, where
    \[
    \deg(\x^{\mathbf{b}_1}) \le \deg(\x^{\mathbf{b}_2}) \le \cdots \le \deg(\x^{\mathbf{b}_{\alpha_I(F)}}).
    \]
    Since $\x^{\mathbf{b}_1},\x^{\mathbf{b}_2} \in \G(I,F)$ are distinct, there exist $1 \le p<q \le d$ such that $b_{1,p} > b_{2,p}$ and $b_{1,q} < b_{2,q}$. For simplicity, we assume that $p = 1$ and $q =2$. For each $1 \le j\le d$, we set
    \begin{align*}
        m_j &= \min\left\{b_{1,j},b_{2,j}\right\}, \\
        M_j &= \max\left\{b_{1,j},b_{2,j}\right\}.
    \end{align*}
    Consider the monomial $f = \prod_{j = 1}^d x_j^{\max\left\{2m_j,M_j\right\}}$. We show that $f \in I^{(2)} \setminus I^2$. We begin by proving that $f\notin I^2$. Let $J=I\cap \K[x_1, \dots  x_d]$. By our assumption that there is no proper containment among the supports of the minimal generators of the ideal $I$, we have $\mathcal{G}(J)=\mathcal{G}(I, F)$. Therefore, it suffices to prove that $f\notin J^2$. We first verify that $f\notin \left( \x^{\mathbf{b}_1}, \x^{\mathbf{b}_2}\right)^2$ by considering the following cases.

    \smallskip
    \noindent \textsc{Case 1}. Assume that $\max\{b_{1,1},2b_{2,1}\} = b_{1,1}$ and $\max\{b_{2,2},2b_{1,2}\} = b_{2,2}$. Since $b_{1,1}\geq 1$, we have $b_{1,1}<2b_{1,1}$. This implies that $\x^{2\mathbf{b}_1} \nmid f$. Similarly, since $1\leq b_{2,2} < 2b_{2,2}$, we have $\x^{2\mathbf{b}_2} \nmid f$. Moreover, $b_{1,1} < b_{1,1} + b_{1,2}$ implies that $\x^{\mathbf{b}_1+\mathbf{b}_2} \nmid f$.

    \smallskip
    \noindent \textsc{Case 2}. Assume that $\max\{b_{1,1},2b_{2,1}\} = 2b_{2,1}$ and $\max\{b_{2,2},2b_{1,2}\} = 2b_{1,2}$.  Since $2b_{2,1} < 2b_{1,1}, ~ 2b_{1,2} < 2b_{2,2}$, and $2b_{2,1} < b_{1,1} + b_{2,1}$, we have $\x^{2\mathbf{b}_1} \nmid f$, and $\x^{2\mathbf{b}_2} \nmid f$, and $\x^{\mathbf{b}_1+\mathbf{b}_2} \nmid f$. 

    \smallskip
    \noindent \textsc{Case 3}. Assume that $\max\{b_{1,1},2b_{2,1}\} = b_{1,1}$ and $\max\{b_{2,2},2b_{1,2}\} = 2b_{1,2}$. Since $b_{1,1} < 2b_{1,1},~ 2b_{1,2} < 2b_{2,2}$, and $b_{1,1} < b_{1,1}+b_{2,1}$, we have $\x^{2\mathbf{b}_1} \nmid f$, and $\x^{2\mathbf{b}_2} \nmid f$, and $\x^{\mathbf{b}_1+\mathbf{b}_2} \nmid f$. 

    \smallskip
    \noindent \textsc{Case 4}. Assume that $\max\{b_{1,1},2b_{2,1}\} = 2b_{2,1}$ and $\max\{b_{2,2},2b_{1,2}\} = b_{2,2}$. Since $2b_{2,1} < 2b_{1,1},~ b_{2,2} < 2b_{2,2}$, and $b_{2,2} < b_{1,2}+b_{2,2}$, we have $\x^{2\mathbf{b}_1} \nmid f$, and $\x^{2\mathbf{b}_2} \nmid f$, and $\x^{\mathbf{b}_1+\mathbf{b}_2} \nmid f$. 

    \smallskip
    We now show that $f \notin J^2$. Suppose, to the contrary, that $f \in J^2$. Then, there exist $\x^{\mathbf{a}}, \x^{\mathbf{c}} \in \G(J)$  such that $\x^{\mathbf{a}+\mathbf{c}} \mid f$. Since $f\notin(\mathbf{x}^{\mathbf{b}_1},\mathbf{x}^{\mathbf{b}_2})^2$, we may assume that $\deg(\x^{\mathbf{a}}) \ge  \deg(\x^{\mathbf{b}_1})$ and $\deg(\x^{\mathbf{c}}) \ge  \deg(\x^{\mathbf{b}_2})$. Hence,
    \begin{equation}\label{eqn: 201}
        \deg(\x^{\mathbf{a}}) + \deg(\x^{\mathbf{c}}) \ge  \deg(\x^{\mathbf{b}_1}) + \deg(\x^{\mathbf{b}_2}).
    \end{equation}   
    Since $\x^{\mathbf{a}+\mathbf{c}} \mid f$, we have $a_j + c_j \le \max\{2m_j,M_j\}\le  b_{1,j} + b_{2,j}$ for all $1 \le j \le d$. Moreover, if $\max\{ b_{1,1},2b_{2,1}\} = b_{1,1}$, then $a_1 + c_1 \le b_{1,1} < b_{1,1} + b_{2,1}$. On the other hand, if $\max\{ b_{1,1}, 2b_{2,1}\} = 2b_{2,1}$, then $a_1 + c_1 \le 2b_{2,1} < b_{1,1} + b_{2,1}$. Therefore, we have 
    \begin{equation}\label{eqn: 202}
        \deg(\x^{\mathbf{a}}) + \deg(\x^{\mathbf{c}}) <  \deg(\x^{\mathbf{b}_1}) + \deg(\x^{\mathbf{b}_2}).
    \end{equation}
    This contradicts \eqref{eqn: 201}. Therefore, $f \notin J^2$. It remains to show that $f\in I^{(2)}$. Let $P \in \MinAss(I)$, and let $Q(P)$ denote the corresponding primary component. It suffices to prove that $f \in Q(P)^2$. After relabeling the variables, if necessary, we assume that $x_1, \ldots, x_k \in P$ and $x_{k+1}, \ldots, x_d \notin P$ for some $k \ge 1$. First, suppose that $k = 1$. Then $x_1^{m_1} \in Q(P)$ and hence $x_1^{2m_1} \in Q(P)^2$. Since $x_1^{2m_1} \mid f$, it follows that $f \in Q(P)^2$. Now assume that $k \ge 2$. For each $1 \le i \le k$ there exists a positive integer $w_i$ such that $x_i^{w_i} \in Q(P)$. Since $Q(P)$ is irreducible and $\x^{\mathbf{b}_1}, \x^{\mathbf{b}_2}\in Q(P)$, there exists $x_j, x_{\ell}\in P, 1 \le j , \ell \le k$ such that $w_j \le b_{1,j}$ and $w_{\ell} \le b_{2,\ell}$. If $j=\ell$, then we have $w_j\leq \min\{b_{1,j}, b_{2,j}\}=m_j$, and hence $x_j^{2w_j}\mid f$. This implies that $f\in Q(P)^2$. On the other hand, if $j\neq \ell$, then  $x_j^{b_{1,j}}\mid f, x_{\ell}^{b_{2,\ell}} \mid f$, and hence $x_j^{w_j}x_{\ell}^{w_{\ell}}\mid f$. Hence, $f\in Q(P)^2$, and this completes the proof.
\end{proof}

The preceding lemma allows us to reduce to the case where $\alpha_I(F)=1$ for every $F\in\mathcal{F}(I)$. Accordingly, throughout the remainder of this article, we assume that this condition holds.

\medskip
Let $\D$ be a simplicial complex, and let $\mathcal{F}(\D)$ denote the set of all facets of $\D$. We write 
\[
\D=\left \langle F_1,\dots , F_t \right \rangle.
\]
whenever $\F(\D)=\{F_1,\dots , F_t\}$. A facet $F\in \F(\D)$ is called a \emph{leaf} if either $F$ is the only facet of $\D$, or there exists a facet $G\in \F(\D)$ with $G\neq F$ such that $H\cap F\subseteq G\cap F$ for every facet $H\in \F(\D)$ with $H\neq F$. Any such facet $G$ is called a \emph{joint} of $F$.

A \emph{subcomplex} of  $\D$ is a simplicial complex $\D'$ satisfying $\F(\D')\subseteq \F(\D)$. A connected simplicial complex $\D$ is called a \emph{simplicial tree} if every non-empty subcomplex of $\D$ has a leaf. A \emph{simplicial forest} is a simplicial complex whose connected components are simplicial trees. Thus, simplicial trees and simplicial forests naturally generalize the notions of trees and forests in graph theory to higher dimensions.

The facet ideals of simplicial forests constitute an important class of Simis ideals. Herzog, Hibi, Trung, and Zheng proved that simplicial forests contain no special cycle of length at least three \cite[Theorem~3.2]{HHTZ2008}. Consequently, every simplicial forest is Mengerian, and hence its facet ideal is Simis \cite[Corollary~1.6]{HHTZ2008}.

We now establish the conjecture for monomial ideals whose underlying simplicial complex is a simplicial forest. The proof relies on identifying sufficient conditions under which such ideals admit a standard linear weighting. To this end, we first recall the notion of an induced subcomplex.

Let $\D$ be a simplicial complex and let $T\subseteq V(\D)$. The \emph{induced subcomplex} of $\D$ on the vertex set $T$ is the subcomplex
\[
    \D[T] = \left\langle H\in\F(\D)\mid H\subseteq T \right\rangle .
\]

\begin{prop}\label{Prop_Sim_tree}
    Let $I$ be a monomial ideal, and let $\D(I)$ denote the associated simplicial complex. Assume that 
    \begin{enumerate}
        \item $\mathcal{F}(I)=\mathcal{F}(\sqrt{I})$;
        \item the irreducible decomposition of $I$ is minimal;
        \item $I$ is a Simis ideal.
    \end{enumerate}
    Then for any $F, G \in \F(\D(I))$ satisfying  $\D(I)\left[F\cup G\right] = \left\langle F, G\right\rangle$, we have  $w(x, F) = w(x,G)$ for all $x \in F \cap G$.
\end{prop}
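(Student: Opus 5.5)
The plan is to argue by contradiction and mimic the construction in the proof of Lemma~\ref{lem:Support_gen}, with the two distinct generators of a common support replaced by the two generators supported on $F$ and $G$. By Lemma~\ref{lem:Support_gen}, the Simis hypothesis together with $\F(I)=\F(\sqrt{I})$ and minimality of the irreducible decomposition forces $\alpha_I(H)=1$ for every $H\in\F(I)$. So let $\x^{\mathbf{a}}$ and $\x^{\mathbf{b}}$ be the unique generators with supports $F$ and $G$, where $a_j=0$ for $x_j\notin F$ and $b_j=0$ for $x_j\notin G$. Suppose, for a contradiction, that $w(x_1,F)=a_1\neq b_1=w(x_1,G)$ for some $x_1\in F\cap G$. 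By symmetry we may assume $a_1>b_1$. We will exhibit $f\in I^{(2)}\setminus I^2$.

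Set $m_j=\min\{a_j,b_j\}$ and $M_j=\max\{a_j,b_j\}$ for $x_j\in F\cup G$, and put $f=\prod_{x_j\in F\cup G}x_j^{\max\{2m_j,M_j\}}$. For $x_j$ in exactly one of $F$ and $G$, the exponent of $x_j$ in $f$ is just $a_j$ or $b_j$.

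\emph{Step 1: $f\in I^{(2)}$.} Let $P\in\MinAss(I)$. The component $Q(P)$ is irreducible, generated by pure powers $x_i^{w_i}$ with $x_i\in P$. Since $\x^{\mathbf{a}},\x^{\mathbf{b}}\in Q(P)$, there are $x_j\in P\cap F$ with $w_j\le a_j$ and $x_\ell\in P\cap G$ with $w_\ell\le b_\ell$.
\begin{itemize}
\item If some single index $j=\ell$ works, then $x_j\in F\cap G$ and $w_j\le m_j$, so $x_j^{2w_j}\mid f$.
\item Otherwise $j\neq\ell$, and $x_j^{w_j}x_\ell^{w_\ell}\mid f$, because the exponent of $f$ at $x_j$ is at least $a_j$ and at $x_\ell$ is at least $b_\ell$.
\end{itemize}
In either case $f\in Q(P)^2$, which is exactly the argument at the end of the proof of Lemma~\ref{lem:Support_gen}.

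\emph{Step 2: $f\notin I^2$.} This is where the hypothesis $\D(I)[F\cup G]=\langle F,G\rangle$ enters. Since $\supp(f)=F\cup G$, any product of two minimal generators dividing $f$ involves only generators whose supports lie in $F\cup G$. By the induced-subcomplex condition and $\alpha_I=1$, these generators are only $\x^{\mathbf{a}}$ and $\x^{\mathbf{b}}$. It therefore suffices to check three non-divisibilities.
\begin{itemize}
\item At $x_1$, the exponent of $f$ is $\max\{2b_1,a_1\}$. This is strictly less than $a_1+b_1$, because $a_1>b_1\ge1$. Hence $\x^{\mathbf{a}+\mathbf{b}}\nmid f$.
\item The same exponent is also strictly less than $2a_1$, so $\x^{2\mathbf{a}}\nmid f$.
\item Since $F\neq G$ are facets, there is $x_j\in G\setminus F$. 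The exponent of $f$ at $x_j$ is $b_j<2b_j$, so $\x^{2\mathbf{b}}\nmid f$.
\end{itemize}
Hence $f\in I^{(2)}\setminus I^2$, contradicting that $I$ is Simis. Therefore $w(x,F)=w(x,G)$ for all $x\in F\cap G$.

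The main point needing care is Step~2's reduction to just the two generators. It is here that the induced-subcomplex hypothesis is essential, together with the fact that facets of $\D(I)$ are pairwise incomparable, which guarantees $G\setminus F\neq\emptyset$. Step~1 is routine once irreducibility of the components is used, and it is verbatim the earlier lemma's argument.
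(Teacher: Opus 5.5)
Your proof is correct and follows essentially the same route as the paper. Both arguments reduce to $\alpha_I=1$ via Lemma~\ref{lem:Support_gen} and use the same monomial $f$, with exponent $\max\{2m_j,M_j\}$ on $F\cap G$ and the generator weights elsewhere; both verify $f\in Q(P)^2$ through irreducibility of the minimal components, and both use the induced-subcomplex hypothesis to reduce $f\notin I^2$ to the three products $\x^{2\mathbf{a}}$, $\x^{\mathbf{a}+\mathbf{b}}$, $\x^{2\mathbf{b}}$. The only cosmetic difference is that you rule out $\x^{2\mathbf{a}}$ at the vertex $x_1$ rather than at a vertex of $F\setminus G$, which is equally valid.
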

\begin{proof}
    By Lemma~\ref{lem:Support_gen}, we may assume that $\alpha_I(F)=1$ for every $F\in \mathcal{F}(I)$. Let $\x^{\mathbf{a}_1}, \x^{\mathbf{a}_2}\in \G(I)$ be minimal monomial generators with supports $F = \{x_1, \ldots,x_k,x_{k+1} \ldots, x_p\}$ and $G= \{x_1, \ldots, x_k,x_{p+1},\ldots,x_q\}$, respectively, where $1\leq  k < p < q \le n$. Suppose, to the contrary, that $w(x_i, F) \ne w(x_i, G)$ for some $1 \le i \le k$. Without loss of generality, we may assume that $i = 1$. For each $1 \le i \le k$, set
    \begin{align*}
        m_i &= \min\left\{w(x_i,F),w(x_i,G)\right\}, \\
        M_i &= \max\left\{w(x_i,F),w(x_i,G)\right\}.
    \end{align*}   
    Consider the monomial
    \[
    f = \prod_{i = 1}^{k}x_{i}^{\max\left\{2m_i,M_i\right\}}\prod_{i = k+1}^{p}x_{i}^{w(x_i,F)} \prod_{i = p+1}^{q}x_i^{w(x_i,G)}.
    \]
    We show that $f \in I^{(2)} \setminus I^2$. We begin by proving that $f \notin I^2$. Since $x_{k+1}^{2w(x_{k+1}, F)} \nmid f$ and $x_{p+1}^{2w(x_{p+1}, G)} \nmid f$, it follows that $\x^{2\mathbf{a}_1} \nmid f$ and $\x^{2\mathbf{a}_2} \nmid f$. Moreover, since $w(x_1, F) + w(x_1,G) > \max\{2m_1,M_1\}$, and hence $\x^{\mathbf{a}_1+\mathbf{a}_2} \nmid f$. Since $\D(I)\left[F \cup G\right] = \left\langle F, G\right\rangle$, it follows that $\mathbf{x}^{\mathbf{a}_1}$ and $\mathbf{x}^{\mathbf{a}_2}$ are the only minimal monomial generators of $I$ whose supports are contained in $F\cup G$. Therefore, $f\notin I^2$. It remains to show that $f\in I^{(2)}$. Let $P\in \MinAss(I)$, and let $Q(P)$ denote the corresponding primary component. It suffices to prove that $f\in Q(P)^2$. Since $\x^{\mathbf{a}_1}, \x^{\mathbf{a}_2} \in Q(P)$ and the irreducible decomposition of $I$ is minimal, there exists $x_i\in F$ and $x_j\in G$ such that $x_i^{w(x_i,F)}\in Q(P)$ and $x_j^{w(x_j,G)}\in Q(P)$. If $i=j$, then $x_i^{2m_i}\in Q(P)^2$. Since $x_i^{2m_i}\mid f$, it follows that $f\in Q(P)^2$. Now, suppose that $i\neq j$. Then $x_i^{w(x_i,F)}x_j^{w(x_j,G)}\in Q(P)^2$. Since $x_i^{w(x_i,F)}x_j^{w(x_j,G)}\mid f$, we again conclude that $f\in Q(P)^2$. Hence, $f\in Q(P)^2$ for every $P\in \MinAss(I)$, and consequently $f\in I^{(2)}$. Thus, $I^{(2)}\neq I^2$, contradicting the assumption that $I$ is Simis. Therefore, $w(x_i,F)=w(x_i,G)$ for all $1\leq i\leq k$. This completes the ~proof.
\end{proof}

The preceding results allow us to establish one of the main theorems of this article.\begin{thm}\label{prop:simp_tree_conj}\label{thm:simpl_tree_conj}
    Let $I$ be a monomial ideal, and let $\D(I)$ denote the associated simplicial complex. Assume that
    \begin{enumerate}
        \item $\mathcal{F}(I)=\mathcal{F}(\sqrt{I})$; 
        \item the irreducible decomposition of $I$ is minimal;
        \item $I$ is a Simis ideal.
    \end{enumerate}
    If $\D(I)$ is a simplicial tree, then $I$ has a standard linear weighting. In particular, Conjecture~\ref{conj} holds for $I$.
\end{thm}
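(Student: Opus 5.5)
The plan is to show directly that every variable has uniform weight. That is, for every vertex $x$ and every two facets $F,G\in\F(\D(I))$ with $x\in F\cap G$, we will prove $w(x,F)=w(x,G)$. By Lemma~\ref{lem:Support_gen} we may assume $\alpha_I(F)=1$ for all $F$, so $w(x,F)$ is well defined. The only algebraic input will be Proposition~\ref{Prop_Sim_tree}, which settles the case $\D(I)[F\cup G]=\langle F,G\rangle$. The rest is combinatorics of simplicial trees: we reduce an arbitrary pair $F,G$ containing $x$ to a chain of such ``clean'' pairs, all containing $x$.

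Fix $x$, and argue by induction on the number $N(F,G)$ of facets of $\D(I)[F\cup G]$. If $N(F,G)=2$, Proposition~\ref{Prop_Sim_tree} applies directly. Otherwise there is a facet $H\neq F,G$ with $H\subseteq F\cup G$. Since the facets of $\D(I)$ are pairwise incomparable, $H\not\subseteq F$ and $H\not\subseteq G$. Hence there exist $y\in H\cap G\setminus F$ and $z\in H\cap F\setminus G$.

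The key claim is that $x\in H$. Suppose $x\notin H$. Then $x\in (F\cap G)\setminus H$, $y\in (G\cap H)\setminus F$ and $z\in (H\cap F)\setminus G$. So $F,x,G,y,H,z$ is a special cycle of length $3$ in $\D(I)$. Since $\D(I)$ is a simplicial tree, this contradicts \cite[Theorem~3.2]{HHTZ2008}, which says simplicial forests have no special cycles of length at least three. This is the step I expect to need the most care: one must check that the definition of special cycle in \cite{HHTZ2008} is exactly this configuration, namely each vertex lies in precisely two consecutive facets of the cycle. If that citation does not match cleanly, I would instead argue on the subcomplex $\langle F,G,H\rangle$, which must have a leaf. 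For instance, if $F$ is a leaf with joint $G$, then $H\cap F\subseteq G\cap F$ contradicts $z\in H\cap F\setminus G$. The other two choices of leaf are symmetric, using $x$ and $y$.

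Given $x\in H$, we have $F\cup H\subseteq F\cup G$ and $G\notin\D(I)[F\cup H]$, since $y\in G$ but $y\notin F\cup H$ would be needed; more simply, $G\not\subseteq F\cup H$ because otherwise $G\subseteq F\cup H$ together with $H\subseteq F\cup G$ forces a comparable-support contradiction. If this is awkward, I would choose $H$ minimizing $N(F,H)$ instead. It follows that $N(F,H)<N(F,G)$, and symmetrically $N(H,G)<N(F,G)$. By induction, $w(x,F)=w(x,H)=w(x,G)$. Hence every variable has uniform weight, so $I$ has a standard linear weighting and $I=J_w$ with $J=\sqrt I$.

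Finally, $J$ is the facet ideal of the simplicial tree $\D(I)$, so it is Simis by \cite[Corollary~1.6]{HHTZ2008}. This yields Conjecture~\ref{conj} for $I$.
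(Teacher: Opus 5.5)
Your approach is sound and differs from the paper's, but the step claiming that the induction parameter drops is not justified as written. Your first reason (``$y\notin F\cup H$ would be needed'') cannot work, since $y\in H$. Your second reason does not follow from the premises you cite. The conditions $H\subseteq F\cup G$, $x\in H$ and pairwise incomparability do not prevent $G\subseteq F\cup H$. For example, take $F=\{x,a,b\}$, $G=\{x,a,d\}$, $H=\{x,b,d\}$: then $F\cup G=F\cup H=G\cup H$ and $N(F,H)=N(F,G)=3$, so the induction stalls. Choosing $H$ to minimize $N(F,H)$ does not help either, since that gives no comparison with $N(F,G)$.

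The repair is already in your hands. Your argument for the key claim used nothing about $x$ beyond $x\in F\cap G$, so it shows every vertex of $F\cap G$ lies in $H$, i.e.\ $F\cap G\subseteq H$. Now suppose $G\subseteq F\cup H$. Then $G\setminus F\subseteq H$ and $G\cap F\subseteq H$, so $G\subseteq H$, contradicting incomparability. Symmetrically, $F\not\subseteq G\cup H$. Hence $G\notin\Delta(I)[F\cup H]$ and $F\notin\Delta(I)[G\cup H]$, and both counts strictly drop. In the example above, the vertex $a\in(F\cap G)\setminus H$ is exactly what creates the special cycle $a,F,b,H,d,G$.

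The key claim itself is fine, either via \cite[Theorem~3.2]{HHTZ2008} or via your leaf argument. For the leaf argument, note that a leaf of $\langle F,G,H\rangle$ may have either of the other two facets as its joint. Each of the six leaf/joint choices is contradicted by one of $x,y,z$.

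With this fix, your proof is correct and takes a genuinely different route from the paper. The paper passes to the induced tree $\Delta(I)[F_1\cup F_2]$ and uses a good leaf to get Faridi's order $F_1,G_1,\dots,G_r$ with $F_1\cap G_1\supseteq\cdots\supseteq F_1\cap G_r$. At each inductive step it builds a fresh witness $f\in I^{(2)}\setminus I^2$ for the pair $(F_1,G_r)$. This requires a case analysis over all products of generators supported on the other $G_i$.

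You confine all the algebra to Proposition~\ref{Prop_Sim_tree}. In place of the good-leaf machinery you use a three-facet combinatorial lemma ($H\subseteq F\cup G\Rightarrow F\cap G\subseteq H$) together with transitivity of equality of weights. This is shorter. It also only uses that any three facets span a subcomplex with a leaf, so it gives the uniform weighting whenever $\Delta(I)$ has no special $3$-cycle, not just for simplicial trees.
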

\begin{proof}
    By Lemma~\ref{lem:Support_gen}, we may assume that $\alpha_I(F)=1$ for all $F\in \mathcal{F}(I)$. Let $F_1,F_2\in\F(\D(I))$ be such that $F_1\cap F_2\neq \emptyset$. We first observe that the induced subcomplex $\D(I)[F_1 \cup F_2]$ is a simplicial tree. Since every simplicial tree admits a good leaf \cite[Corollary 3.4]{MR2434285}, it follows that $\D(I)[F_1 \cup F_2]$ has a good leaf. Note that any facet $H\in\D(I)[F_1\cup F_2]$ distinct from $F_1$ and $F_2$ has no free vertex, since every vertex of $H$ belongs to either $F_1$ or $F_2$. Hence, $H$ cannot be a leaf of $\D(I)[F_1 \cup F_2]$. It follows that either $F_1$ or $F_2$ is a good leaf of $\D(I)[F_1 \cup F_2]$. Without loss of generality, assume that $F_1$ is a good leaf. Then by \cite[Theorem 3.4]{faridi2014good}, there exists an ordering $F_1, G_1, \dots, G_r$ of the facets of $\D(I)[F_1 \cup F_2]$ such that
    \[
    F_1 \cap G_1 \supseteq F_1 \cap G_2 \supseteq \cdots \supseteq F_1 \cap G_r,
    \]
    and, for each $1 \le i \le r$, the facet $G_i$ is a leaf of the subcomplex $\left\langle F_1, G_1, \ldots, G_i \right\rangle$. We show that, for each $1 \le i \le r$ every vertex in $F_1 \cap G_i$ has the same weight with respect to $F_1$ and $G_i$. Since $F_2 = G_j$ for some $1 \le j \le r$, this will imply that every vertex in $F_1 \cap F_2$ has the same weight with respect to $F_1$ and $F_2$. We proceed by induction on $r$. We first consider the case $r = 1$. Suppose, to the contrary, assume that not every vertex in $F_1 \cap G_1$ has the same weight with respect to $F_1$ and $G_1$. Then there exists $x_\ell \in F_1 \cap G_1$ such that $w(x_\ell, F_1) \ne w(x_\ell, G_1)$. Since $\D(I)[F_1 \cup G_1] = \left\langle F_1, G_1 \right\rangle$, it follows from Proposition \ref{Prop_Sim_tree} that $I^{(2)} \ne I^2$, contradicting the assumption that $I$ is Simis. Hence, every vertex in $F_1 \cap G_1$ has the same weight with respect to $F_1$ and $G_1$. Now assume that $r \ge 2$ and the assertion holds for $r-1$. That is, for the ordering $F_1, G_1, \dots, G_{r-1}$ satisfying 
    \[
    F_1 \cap G_1 \supseteq F_1 \cap G_2 \supseteq \cdots \supseteq F_1 \cap G_{r-1},
    \]
    every vertex in $F_1 \cap G_i$ has the same weight with respect to $F_1$ and $G_i$ for each $1\leq i\leq r-1$. It remains to prove that every vertex in $F_1 \cap G_r$ has uniform weight. Suppose, to the contrary, that this is not the case. Write $F_1 = \{x_1, \dots, x_k,x_{k+1}, \dots, x_p\}$ and $G_r = \{x_1, \dots, x_k, x_{p+1}, \dots, x_q\}$, where $k < p < q \le n$.
    Then, $F_1 \cap G_r = \{x_1, \dots, x_k\}$. Assume that there exists $x_\ell \in F_1 \cap G_r$ such that $w(x_\ell,F_1) \neq w(x_\ell,G_r)$. Without loss of generality, assume that $\ell = 1$. We shall prove that $I^{(2)} \neq I^2$. For each $1 \le \ell \le k$, set
    \[
        m_\ell = \min\left\{w(x_\ell,F_1),w(x_\ell,G_r)\right\} ~ \text{and} ~ M_\ell = \max\left\{w(x_\ell,F_1),w(x_\ell,G_r)\right\}.
    \]
    Now, consider the monomial
    \[
    f = \prod_{\ell = 1}^{k}x_{\ell}^{\max\left\{2m_\ell,M_\ell\right\}}\prod_{\ell = k+1}^{p}x_{\ell}^{w(x_\ell,F_1)} \prod_{\ell = p+1}^{q}x_\ell^{w(\ell,G_r)}.
    \]    
    We show that $f \in I^{(2)} \setminus I^2$. Since the irreducible decomposition of $I$ is minimal, the same argument as in the proof of Proposition \ref{Prop_Sim_tree} shows that $f \in I^{(2)}$. It now remains to show that $f \notin I^2$. For each facet $F_1, G_1, \dots, G_r$ of $\D(I)$, let $\x^{\mathbf{a}_1} \in \G(I), \x^{\mathbf{b}_i} \in \G(I), 1\leq i\leq r$ be the monomial generators of $I$ such that $\supp(\x^{\mathbf{a}_1}) = F_1$ and $ \supp(\x^{\mathbf{b}_i}) = G_i,  1\leq i\leq r$. Let $J=\l \x^{\mathbf{a}_1} , \x^{\mathbf{b}_i}\mid 1\leq i\leq r\r$. It suffices to show that $f\notin J^2$. Let $s$ be the smallest index such that $F_1 \cap G_s = F_1 \cap G_r$. Then
    \[
    F_1 \cap G_1 \supseteq \cdots \supseteq F_1 \cap G_{s-1}\supseteq F_1 \cap G_{s}=F_1 \cap G_{s+1}=\cdots =F_1 \cap G_{r},
    \]
    and $F_1 \cap G_i \supsetneq F_1 \cap G_s$ for each $1 \le i \le s-1$. Now, we consider the following cases:

    \smallskip \noindent
    \textsc{Case 1}. Since the exponent of $x_{k+1}$ in $f$ is $w(x_{k+1},F_1)$, we have  $x_{k+1}^{2w(x_{k+1},F_1)} \nmid f$. Hence $\mathbf{x}^{2\mathbf{a}_1} \nmid f$.

    \smallskip \noindent
    \textsc{Case 2}. We shall show that $\x^{\mathbf{a}_1+\mathbf{b}_{i}} \nmid f$ for any $1\leq i\leq r$. If $i=r$, then $w(x_1,F_1) + w(x_1,G_r) > \max\{2m_1, M_1\}$, and therefore $\mathbf{x}^{\mathbf{a}_1+\mathbf{b}_r} \nmid f$. Now suppose that $s \le i \le r-1$. Take any $x_j\in  G_i \setminus G_r$. Since $F_1\cap G_i=F_1\cap G_r$, we have $x_j\notin F_1$. Hence, $x_j\notin F_1\cup G_r= \supp(f)$, which implies that $\x^{\mathbf{a}_1+\mathbf{b}_{i}} \nmid f$. Finally, suppose that $1 \le i \le s-1$. Since $F_1\cap G_i\neq F_1\cap G_r=\{x_1, \dots , x_k\}$, without loss generality, assume that $x_{k+1} \in (F_1 \cap G_i) \setminus (F_1 \cap G_r)$. Note that the exponent of $x_{k+1}$ in $f$ is $w(x_{k+1}, F_1)$, whereas $x_{k+1}^{w(x_{k+1}, F_1) + w(x_{k+1}, G_i)}\mid \x^{\mathbf{a}_1+\mathbf{b}_{i}}$. Therefore, ~$\x^{\mathbf{a}_1+\mathbf{b}_{i}} \nmid f$.

    \smallskip \noindent
    \textsc{Case 3}. We show that $\x^{\mathbf{b}_i+\mathbf{b}_{j}} \nmid f$ for all $1\leq i, j\leq r$. First, suppose that $i = j = r$. Since $x_{p+1}^{2w(x_{p+1},G_r)} \nmid f$, it follows that $\x^{2\mathbf{b}_r} \nmid f$. Secondly, assume that $s \le i \le j \le r-1$ or $1 \le i \le s-1$, $s \le j \le r-1$. Proceeding similarly as in Case 2, there exists $x_{\ell} \in G_j \setminus G_r$ such that $x_{\ell} \notin F_1$. Thus, $x_{\ell}\mid \x^{\mathbf{b}_i+\mathbf{b}_{j}}$, but $x_{\ell}\notin F_1\cup G_r=\supp(f)$.  Hence, it follows that $\mathbf{x}^{\mathbf{b}_i+\mathbf{b}_j} \nmid f$. Next, suppose that $1 \le i \le j \le s-1$. Without loss of generality, assume that $x_{k+1} \in F_1 \cap G_j$. Since $F_1 \cap G_i \supseteq F_1 \cap G_j$, we have $x_{k+1} \in G_i$. By the induction hypothesis, we obtain $w(x_{k+1}, F_1) = w(x_{k+1}, G_i),$ 
    and $w(x_{k+1}, F_1) = w(x_{k+1}, G_j)$. Hence, $w(x_{k+1}, G_i) + w(x_{k+1}, G_j) = 2w(x_{k+1}, F_1)$. Since $x_{k+1} \notin F_1 \cap G_r$, it follows that $x_{k+1}^{w(k+1, G_i) + w(x_{k+1}, G_j)} \nmid f.$ Thus, $\mathbf{x}^{\mathbf{b}_i+\mathbf{b}_j} \nmid f.$
    Finally, suppose that $1 \le i \le s-1$ and $j = r$. If $G_i \not\subseteq F_1 \cup G_r$, then $\x^{\mathbf{b}_i+\mathbf{b}_r} \nmid f$. Assume that $G_i \subseteq F_1 \cup G_r$. Then there exists $x_i \in G_i$ such that $x_i \in G_r \setminus F_1$, and hence $x_i \notin F_1 \cap G_r$. Thus, $x_i^{w(x_i, G_i) + w(x_i, G_r)} \mid \x^{\mathbf{b}_i+\mathbf{b}_r}$,
    while the exponent of $ x_i $ in $ f $ is at most $ w(x_i,G_r) $. Then $x_i^{w(x_i, G_i) + w(x_i, G_r)}  \nmid f$, and consequently, $\x^{\mathbf{b}_i+\mathbf{b}_r} \nmid f$. Hence, $f\notin J^2$, which implies $f \notin I^2$. Thus $f\in I^{(2)}\setminus I^2$, contradicting the assumption that $I$ is Simis. Therefore, $w(x_i,F_1) = w(x_i,G_r)$ for all $x_i \in F_1 \cap G_r$. This completes the proof.
\end{proof}

We conclude this section by studying the Cohen--Macaulay property of weighted monomial ideals associated to grafted simplicial complexes. The Cohen--Macaulay property is one of the fundamental homological properties of monomial ideals, and for simplicial forests it is characterized combinatorially by the notion of \emph{grafting}, introduced by Faridi in \cite{FARIDI2005299}. We investigate the extent to which this characterization extends to the weighted setting. We begin by recalling the definition of a grafted simplicial complex.

\begin{defn}\cite[Definition~7.1]{FARIDI2005299}\label{Grafting_Defn}
    A simplicial complex $\D$ is  a \emph{grafting} of the simplicial complex 
    $\Delta'= \left\langle G_1,\ldots,G_s\right\rangle$
    with the simplices $F_1,\ldots,F_r$, or simply a \emph{grafted} simplicial complex, if
    \[
    \Delta=\left\langle F_1,\ldots,F_r\right\rangle \cup \left\langle G_1,\ldots,G_s\right\rangle
    \]
    and the following conditions hold:
    \begin{enumerate}
        \item $V(\Delta') \subseteq F_1\cup \cdots \cup F_r$,
        \item $F_1,\ldots,F_r$ are all the leaves of $\Delta$,
        \item $\{G_1,\ldots,G_s\}\cap \{F_1,\ldots,F_r\} = \emptyset$,
        \item For $i\neq j$, $F_i\cap F_j= \emptyset$,
        \item if $G_i$ is a joint of $\Delta$, then $\Delta\setminus \left\langle G_i\right\rangle$ is also grafted.
    \end{enumerate}
\end{defn}

Throughout this section, we view a grafted simplicial complex $\D$ as the grafting of the simplicial complex $\D' = \left\langle G_1, \ldots, G_s \right\rangle$ with leaves $F_1, \ldots, F_r$, and write $\D = \D' \cup \la(\D)$, where $\la(\D) = \left\langle F_1, \ldots, F_r \right\rangle$ denotes the subcomplex generated by all the leaves of $\D$. Let $S = \K[x_1, \ldots, x_m, y_1, \ldots, y_M]$, where $V(\D') = \{x_1, \ldots, x_m\}$. For each leaf $F_i, 1 \le i \le r$, define $\Lambda(F_i) := F_i \setminus V(\D')$, the set of free vertices of $F_i$. Then $\bigcup_{i = 1}^{r}\Lambda(F_i) = \left\{y_1, \ldots, y_M\right\}$ is prcisely the set of all free vertices of $\D$.

\medskip
We begin with the following structural lemma. Although this result follows from existing results in the literature, we include a proof for the convenience of the reader, as it will be used repeatedly throughout this section.
\begin{lemma}\label{Vertex_Cover_Lemma}
    Let $\D$ be a grafted simplicial complex, and let $F_i\in\la(\D)$. Then no minimal vertex cover of $\Delta$ contains two distinct vertices of $F_i$.
\end{lemma}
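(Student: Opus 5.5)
The plan is to combine a counting argument with the known unmixedness of facet ideals of grafted complexes (Faridi \cite{FARIDI2005299}); I will cite this unmixedness rather than reprove it. Write $\D=\D'\cup\la(\D)$ with leaves $F_1,\ldots,F_r$. By condition (4) of Definition~\ref{Grafting_Defn}, these leaves are pairwise disjoint. Hence every vertex cover $C$ of $\D$ must contain at least one vertex of each $F_k$, and $|C|\ge r$.

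Next I show that some minimal vertex cover has size exactly $r$, so that $\operatorname{ht} I(\D)=r$. For each $k$ choose a vertex $z_k\in F_k$, taking $z_k\in V(\D')$ whenever $F_k\cap V(\D')\neq\emptyset$. The point to check is that the choices can be made so that $\{z_1,\ldots,z_r\}$ also meets every $G_j$.
\begin{itemize}
\item The intended input is Faridi's structure theory for grafted complexes: $S/I(\D)$ is Cohen--Macaulay of dimension $|V(\D)|-r$, so the height of the facet ideal equals the number of leaves.
\item If I want to avoid quoting the dimension statement, I would instead induct on the number $s$ of facets of $\D'$ using condition (5). Removing a joint $G_j$ leaves a grafted complex with the same leaves, and a cover of size $r$ of the smaller complex can be adjusted inside the leaf containing a vertex of $G_j$.
\end{itemize}

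Given $\operatorname{ht} I(\D)=r$, Faridi's theorem that grafted simplicial complexes are Cohen--Macaulay, hence unmixed, gives that every minimal vertex cover $C$ has $|C|=r$. Combined with the lower bound, $C$ then contains exactly one vertex from each of the disjoint sets $F_1,\ldots,F_r$. In particular $C$ contains at most one vertex of $F_i$, which is the claim.

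The main obstacle is the middle step: establishing that the height is exactly $r$, equivalently exhibiting a cover with one vertex per leaf. That is where the grafting axioms (1) and (5) genuinely enter. I expect to handle it by the induction on joints sketched above if a clean citation of Faridi's dimension formula is not available.
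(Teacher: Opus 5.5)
Your argument is correct and is essentially the paper's. Disjointness of the leaves gives $|C|\ge r$ for every vertex cover $C$, and Faridi's result that every minimal vertex cover of a grafted complex has cardinality exactly $r$ then rules out two vertices from the same leaf; the paper cites this directly as \cite[Theorem~7.6]{FARIDI2005299}, so your separate ``height $=r$'' step and the sketched induction on joints are unnecessary, since that theorem already gives both unmixedness and the value $r$.
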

\begin{proof}
    By \cite[Theorem~7.6]{FARIDI2005299}, every minimal vertex cover of $\Delta$ has cardinality $r=|\mathcal{L}(\Delta)|$. Since the leaves of $\mathcal{L}(\Delta)$ are pairwise disjoint, each leaf must be intersected independently by a vertex cover. Hence, if a minimal vertex cover contains two distinct vertices of some leaf $F_i$, then it must also contain at least one vertex from each of the remaining $r-1$ leaves. Consequently, $|\mathcal{C}|\ge 2+(r-1)=r+1$, contradicting the fact that $|\mathcal{C}|=r$.
\end{proof}

Let $I$ be a monomial ideal, and let $\Delta(I)$ denote the associated simplicial complex. Suppose that $\Delta(I)$ is a grafting of a simplicial complex $\Delta'$. We now investigate the Cohen--Macaulay property of $I$. The following lemma plays a key role in the proof of the main theorem and establishes a necessary condition for $I$ to be Cohen--Macaulay.
\begin{lemma}\label{Grafting_Lemma_1}
    Let $I$ be a monomial ideal, and suppose that the associated simplicial complex $\D(I)$ is a grafting of a simplicial complex $\D'$. If either of the following conditions holds, then $I$ has an embedded associated prime.
    \begin{enumerate}[\rm(i)]
        \item There exists $F\in \la(\D(I))$ such that $\alpha_I(F) \ge 2$.
        
        \item There exist $G \in \D'$, $F \in \la(\D(I))$, and integers $1 \le t \le \alpha_I(G)$, $1 \le t' \le \alpha_I(F)$ such that $w(x_i,F,t') < w(x_i,G,t)$ for some vertex $x_i \in F \cap G$.
    \end{enumerate}
\end{lemma}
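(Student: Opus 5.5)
The plan is to produce, in each case, a monomial $u\notin I$ such that $(I:u)$ contains two distinct variables lying in a single leaf $F\in\la(\D(I))$. This is enough. The set $\{(I:v)\mid v \text{ a monomial},\ v\notin I,\ (I:u)\subseteq (I:v)\}$ has a maximal element. That element is a monomial prime $P=(I:v)\in\Ass(I)$, and it contains those two vertices of $F$. The minimal primes of $I$ are generated by the minimal vertex covers of $\D(I)$, because $\sqrt I$ is the facet ideal of $\D(I)$ and $\F(I)=\F(\sqrt I)$. By Lemma~\ref{Vertex_Cover_Lemma}, no minimal vertex cover contains two vertices of $F$. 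Hence $P$ is not minimal, so it is an embedded associated prime of $I$.

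\emph{Case (i).} Let $F\in\la(\D(I))$ with $\alpha_I(F)\ge 2$, and put $J=(\G(I,F))\subseteq \K[F]$. Since no support of a minimal generator is properly contained in another, $\G(I,F)$ consists exactly of the minimal generators of $I$ whose support lies in $F$. First I would show that $J$ has an irreducible component involving at least two variables. If every irreducible component were of the form $(x_j^{c})$, then collecting components by variable gives $J=\bigcap_j (x_j^{c_j})=\bigl(\prod_j x_j^{c_j}\bigr)$, which is principal. That contradicts $\alpha_I(F)\ge2$. So pick a component $(x_p^{a_p},x_q^{a_q},\dots)$ with $p\neq q$. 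Let $u$ be its standard socle witness in $\K[F]$: the monomial with $x_j$-exponent $a_j-1$ on the variables of the component and large exponent elsewhere, chosen so that $u\notin J$ and $x_p,x_q\in(J:u)$. Then $x_pu,x_qu\in I$. Moreover $u\notin I$, since $\supp(u)\subseteq F$ and the only generators of $I$ supported inside $F$ are those of $J$. The opening argument then finishes this case.

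\emph{Case (ii).} Let $g=\x^{\mathbf c}$ be the $t$-th generator with support $G$, and $h=\x^{\mathbf b}$ the $t'$-th generator with support $F$. We have $b_i=w(x_i,F,t')<w(x_i,G,t)=c_i$. Since $F$ is a leaf of a grafting, it has a free vertex $y\in\Lambda(F)$, so $y\notin G$. I would take
\[
u=\frac{g}{x_i}\cdot y^{\,b_y-1}\cdot\prod_{z\in F\setminus(G\cup\{y\})} z^{\,b_z}\cdot\prod_{z\in (F\cap G)\setminus\{x_i\}} z^{\max\{b_z,c_z\}} .
\]
Then $x_iu$ is divisible by $g$, and $yu$ is divisible by $h$, because the $x_i$-exponent of $u$ is $c_i-1\ge b_i$. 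So $x_i,y\in(I:u)$, and both lie in $F$.

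The main obstacle is verifying $u\notin I$. Here $\supp(u)\subseteq F\cup G$, and other generators with support inside $F\cup G$ might divide $u$. I would handle this with the grafting structure. Leaves are pairwise disjoint and $V(\D')$ is covered by the leaves. A generator dividing $u$ must therefore be supported on a facet $H\subseteq F\cup G$, and it cannot contain $y$ to the full power $b_y$ unless $H=F$. Since the exponents of $u$ on $G\setminus F$ are exactly those of $g$ and $x_i$ is lowered, I expect a comparison of exponents to exclude $H=G$, $H=F$, and the remaining facets of $\D'$ contained in $F\cup G$. If this fails for some $H$, I would lower further exponents of $u$ on $F\setminus G$ to $b_z-1$. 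This keeps $y\in(I:u)$ after adjusting the witness as in Case (i), applied inside $\K[F]$. Once $u\notin I$ is established, the opening argument shows that $I$ has an embedded associated prime.
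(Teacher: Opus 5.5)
Your opening reduction is sound. An associated prime containing $(I:u)$ lies in $\Ass(I)$, and by Lemma~\ref{Vertex_Cover_Lemma} it cannot be minimal if it contains two vertices of one leaf. Case (i) also works: a non-principal component of the irredundant irreducible decomposition of $(\G(I,F))$, together with its socle witness, gives $u\notin I$ with $x_p,x_q\in(I:u)$. Irredundancy of that component is what makes $x_pu,x_qu\in J$. This is heavier than the paper's route, which just checks that $\prod_{z\in F}z^{\nu_z(F)}$ lies in $I^{(1)}\setminus I$, but it is correct.

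The gap is in Case (ii). You leave the step $u\notin I$ as an expectation, and for your $u$ it is false in general. To force $h\mid yu$ you raise the exponents on $(F\cap G)\setminus\{x_i\}$ to $\max\{b_z,c_z\}$. When $\alpha_I(G)\ge 2$, which hypothesis (ii) allows, this can let a second generator with support $G$ divide $u$.

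For instance, take $I=(x_1x_2^2y_1,\ x_3y_2,\ x_1^3x_2x_3,\ x_1^2x_2^2x_3)$. Then $\D(I)$ is the grafting of $\langle\{x_1,x_2,x_3\}\rangle$ with the leaves $F=\{x_1,x_2,y_1\}$ and $\{x_3,y_2\}$. Choose $g=x_1^3x_2x_3$, $h=x_1x_2^2y_1$, $x_i=x_1$, $y=y_1$, so $b_1=1<3=c_1$. Your recipe gives $u=x_1^2x_2^2x_3$, or $x_1^2x_2^3x_3$ if the product is read literally, and both lie in $I$. Your fallback of lowering exponents on $F\setminus G$ does not touch the offending coordinate $x_2$. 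You also never reduce to $\alpha_I(F)=1$ via (i), so other generators supported on $F$ could divide $u$ as well.

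The repair is to stop insisting on two leaf vertices in $(I:u)$.
\begin{itemize}
\item Take $u=g/x_i$. It is not in $I$ because it properly divides a minimal generator.
\item Let $P$ be a minimal prime with cover $\mathcal{C}$. If $x_i\notin\mathcal{C}$, then $x_i\in(I:u)\setminus P$.
\item If $x_i\in\mathcal{C}$, then $\mathcal{C}\cap F=\{x_i\}$ by Lemma~\ref{Vertex_Cover_Lemma}. Since $c_i-1\ge b_i$, we get $h/x_i^{b_i}\in(I:u)\setminus P$.
\end{itemize}
Hence $(I:u)$ lies in no minimal prime of $I$. Since $S/(I:u)\cong (uS+I)/I$, every associated prime of $(I:u)$ is an associated prime of $I$, and it must be embedded. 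This is exactly the paper's argument that $g/x_i\in I^{(1)}\setminus I$.
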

\begin{proof}
    To prove that $I$ has an embedded associated prime, it suffices to show that $I^{(1)} \ne I$. We do so by constructing a monomial in $I^{(1)}\setminus I$.

    \smallskip
    \noindent
    (i) For each $y_i \in \Lambda(F)$, let $m_i = \min\{w(y_i,F,t) : 1 \le t \le \alpha_I(F)\}$, and consider the monomial
    \[
    f =\prod x_i^{\nu_i(F)}\prod y_i^{m_i}.
    \]
    We show that  $f \in I^{(1)} \setminus I$. Since $\alpha_I(F) \ge 2$, it follows immediately that $f \notin I$. To show $f \in I^{(1)}$, it suffices to show that $f \in Q(P)$ for every $P\in \MinAss(I)$. Let $\Cla$ be the minimal vertex cover corresponding to $P$. By Lemma \ref{Vertex_Cover_Lemma}, if $x\in F\cap \Cla$, then $(F \setminus \{x\}) \cap \Cla = \emptyset$. Hence, whenever $x_i \in F \cap \Cla$, we have $x_i^{\nu_i(F)} \in Q(P)$. Similarly, if $y_i \in \Cla$, then $y_i^{m_i} \in Q(P)$. Therefore, $f \in  Q(P)$ for every $P \in \MinAss(I)$, and consequently, $f \in I^{(1)}$.

    \medskip
    \noindent
    (ii) By part~(i), we may assume that $\alpha_I(F)=1$ for all $F \in \la(\D(I))$. Let $1 \le t \le \alpha_I(G)$ be such that $w(x_i,F,1) < w(x_i,G,t)$, and let $\x^{\mathbf{b}_{t}} \in \G(I)$ be the corresponding generator with $\supp(\x^{\mathbf{b}_{t}}) = G$ and $b_{t,i}=w(x_i, G,t)$.  Now, consider the monomial
    \[
    g = \dfrac{\x^{\mathbf{b}_t}}{x_i}.
    \]
    We show that $g \in I^{(1)} \setminus I$. Since $g\mid \x^{\mathbf{b}_{t}}$, and $g\neq \x^{\mathbf{b}_{t}}$, we have $g\notin I$. It remains to show that $g\in I^{(1)}$. Let $P\in \MinAss(I)$ and $\mathcal{C}$ be the minimal vertex cover of $\D(I)$ corresponding to $P$. By Lemma~\ref{Vertex_Cover_Lemma}, if $x_i \in \Cla$, then $(F \setminus \{x_i\}) \cap \Cla = \emptyset$. Thus, $x_i^{w(x_i,F,1)} \in Q(P)$. On the other hand if $x_i \notin \Cla$, then $\dfrac{\x^{\mathbf{b}_{t}}}{x_i^{b_{t,i}}} \in Q(P)$. Since $w(x_i,F,1)+1 \le b_{t,i}$, write $g =  x_i^{w(x_i,F,1)} \cdot \dfrac{\x^{\mathbf{b}_t}}{x_i^{w(x_i,F, 1)+1}}$. Thus $g\in Q(P)$ for every $P\in \MinAss(I)$, proving that $g\in I^{(1)}$. Therefore, $g\in I^{(1)}\setminus I$.
\end{proof}

The preceding lemma identifies necessary conditions for the Cohen--Macaulayness of $I$. We now establish one of the main results of this section by characterizing the Cohen--Macaulay property for a class of monomial ideals whose associated simplicial complexes are grafted. Since polarization plays a fundamental role in the proof, we first recall this construction using the notation of \cite{peeva2010graded}.
\begin{defn}\cite[Construction~21.7]{peeva2010graded}
\begin{enumerate}
    \item Let $\mathbf{x}^{\mathbf{a}} = x_1^{a_1} \cdots x_n^{a_n}$ be a monomial in $S$. The \emph{polarization} of $\mathbf{x}^{\mathbf{a}}$ is defined to be
    \[
   ( \mathbf{x}^{\mathbf{a}})^{\pol} = (x_1^{a_1})^{\pol} \cdots (x_n^{a_n})^{\pol},
    \]
    where the operator $(*)^{\pol}$ replaces $x_i^{a_i}$ by a product of distinct variables $\prod_{j=1}^{a_i} x_{i,j}$.
    
    \item Let $I = \left(\mathbf{x}^{\mathbf{a}_1}, \dots, \mathbf{x}^{\mathbf{a}_r}\right) \subseteq S$ be a monomial ideal. The \emph{polarization} of $I$ is defined to be the ~ideal
    \[
    I^{\pol} = \left((\mathbf{x}^{\mathbf{a}_1})^{\pol}, \dots, (\mathbf{x}^{\mathbf{a}_r})^{\pol}\right)
    \]
    in a new polynomial ring $S^{\pol} = \mathbb{K}[x_{i,j} \mid 1 \leq i \leq n, 1 \leq j \leq p_i]$, where $p_i$ is the maximum power of $x_i$ appearing in $\mathbf{x}^{\mathbf{a}_1}, \dots, \mathbf{x}^{\mathbf{a}_r}$.
\end{enumerate}

\end{defn}

\begin{thm}\label{Grafting_CM}
    Let $I \subseteq S=\K[x_1, \ldots, x_m,y_1, \ldots,y_M]$ be a monomial ideal satisfying $\mathcal{F}(I)=\mathcal{F}(\sqrt{I})$, and suppose that $\D(I)$ is a grafting of a simplicial complex $\D'$. Let $J = I \cap \K[x_1, \ldots, x_m]$, and assume that $J$ admits a standard linear weighting. Then $S/I$ is Cohen--Macaulay if and only if the following conditions hold: 
    \begin{enumerate}[\rm(i)]
        \item $\alpha_I(F)=1$ for every $F\in \la(\D(I))$;
        \item for every $F\in \la(\D(I))$, $G\in \D'$, and every vertex $x_i\in F\cap G$, $w(x_i,F)\ge w(x_i,G)$.
    \end{enumerate}
\end{thm}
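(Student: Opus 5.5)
The two implications are handled separately; the forward direction is essentially done by Lemma~\ref{Grafting_Lemma_1}.

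\textbf{Necessity.} If $S/I$ is Cohen--Macaulay, then $I$ is unmixed. In particular, $I$ has no embedded associated primes. Lemma~\ref{Grafting_Lemma_1}(i) then forces $\alpha_I(F)=1$ for every leaf $F\in\la(\D(I))$. Lemma~\ref{Grafting_Lemma_1}(ii) forces $w(x_i,F)\ge w(x_i,G,t)$ for every generator with support $G\in\D'$. Since $J$ has a standard linear weighting, we have $\alpha_I(G)=1$, so condition (ii) follows as stated.

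\textbf{Sufficiency.} I would pass to the polarization. Recall that $S/I$ is Cohen--Macaulay if and only if $S^{\pol}/I^{\pol}$ is, and $I^{\pol}$ is square-free. So the plan is to show that $I^{\pol}$ is the facet ideal of a grafted simplicial forest, after which Faridi's theorem \cite{FARIDI2005299} (grafted forests are exactly the Cohen--Macaulay forests) finishes the argument.

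Under (i) and the uniform weighting of $J$, each vertex $x_i$ of $\D'$ has a common weight $w_i:=w(x_i,G)$ over all $G\in\D'$ containing $x_i$. Condition (ii) gives $w(x_i,F)\ge w_i$ for the unique leaf $F$ containing $x_i$; uniqueness holds because the leaves are pairwise disjoint. The facets of $\D(I^{\pol})$ are then as follows.
\begin{itemize}
\item Each $G\in\D'$ becomes $G^{\pol}=\{x_{i,j}: x_i\in G,\ j\le w_i\}$.
\item Each leaf $F$ becomes $F^{\pol}$, which contains $x_{i,j}$ for $j\le w_i$ when $x_i\in V(\D')$. Its remaining vertices are the extra variables $x_{i,j}$ with $w_i<j\le w(x_i,F)$, together with the polarized free vertices $y_{\ell,j}$.
\end{itemize}
The extra vertices lie in exactly one facet, since each $x_i$ lies in only one leaf. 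Hence the complex $\D'^{\pol}=\langle G^{\pol}\rangle$ has vertex set inside $\bigcup F^{\pol}$, and the $F^{\pol}$ are pairwise disjoint.

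In fact, the map $x_i\mapsto\{x_{i,1},\dots,x_{i,w_i}\}$, together with enlarging the leaves by free vertices, is a ``blow-up'' of each vertex into a clique of parallel vertices. This operation preserves all intersection patterns: two facets meet in the polarized complex if and only if the originals meet, and containment relations among pairwise intersections are preserved. Consequently:
\begin{itemize}
\item leaves, joints and the simplicial-tree property transfer;
\item if $F$ is a leaf of some subcomplex with joint $G$, then $F^{\pol}$ is a leaf with joint $G^{\pol}$.
\end{itemize}
Conditions (1)--(4) of Definition~\ref{Grafting_Defn} transfer directly. For condition (5), I would run the same correspondence on $\D\setminus\langle G_i\rangle$, using induction on the number of facets.

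I also need $\D(I)$ itself to be a forest, which I take to be implicit via Faridi's grafting framework. Alternatively, Cohen--Macaulayness could be obtained directly by showing that the polarized complex is grafted, hence unmixed of height $r$. In that case I would cite \cite[Theorem~7.6/8.2]{FARIDI2005299}, i.e.\ that grafted simplicial forests have Cohen--Macaulay facet ideals.

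\textbf{Main obstacle.} The hardest step is verifying that the leaves of $\D(I^{\pol})$ are exactly the $F^{\pol}$, so that no $G^{\pol}$ becomes a new leaf. This is where the inequality (ii) matters. Had $w(x_i,F)<w_i$, the vertices $x_{i,j}$ with $j>w(x_i,F)$ would lie only in $G^{\pol}$'s and not in any $F^{\pol}$. That would break condition (1) of the grafting definition, and it is exactly the situation producing the embedded prime of Lemma~\ref{Grafting_Lemma_1}. With (ii) in force, the vertex sets of $\D'^{\pol}$ are covered by the leaves, and the leaf/joint structure is inherited verbatim.
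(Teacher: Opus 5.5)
Your argument is sound. The necessity half is exactly the paper's, and you add a useful precision: the uniform weighting of $J$ forces $\alpha_I(G)=1$, so $w(x_i,G)$ in (ii) makes sense.

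For sufficiency you take a genuinely different route.
\begin{itemize}
\item \emph{The paper's route.} It collapses each leaf $F_i$ to one variable $u_i$, obtaining an Artinian ideal $K\subseteq\K[u_1,\dots,u_r]$. It then identifies $I^{\pol}$ with $K^{\pol}$ through a bijection of variables $\phi$. Cohen--Macaulayness follows from ``Artinian plus polarization'', without Faridi's Cohen--Macaulay theorem.
\item \emph{Your route.} You stay square-free. For distinct facets you have $A^{\pol}\cap C^{\pol}=\beta(A\cap C)$, where $\beta(X)=\bigcup_{x_i\in X}\{x_{i,1},\dots,x_{i,w_i}\}$ satisfies $\beta(X)\subseteq\beta(Y)$ iff $X\subseteq Y$. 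This transports leaves and joints of every subcomplex to $\D(I^{\pol})$. Together with (ii) it transports the grafting axioms, and Faridi's theorem finishes.
\end{itemize}
The paper's route is self-contained, but its explicit $\phi$ is delicate. It carries the polarized generator of $G\in\D'$ into $K^{\pol}$ only if the copies in $G^{\pol}\cap F_i^{\pol}$ land on an initial segment $u_{i,1},\dots,u_{i,N}$. That requires ordering the copies along the chain of traces $H\cap F_i$, which is a chain because of axiom (5). The block-by-block order used to define $\phi$ does not achieve this. For example, with $I=(x_1^2x_2^2y_1,\,x_3y_2,\,x_1x_2x_3)$, $\phi$ sends $x_{1,1}x_{2,1}x_{3,1}$ to $u_{1,1}u_{1,3}u_{2,1}\notin K^{\pol}$. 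Your route only needs the intersection poset to be preserved, so it avoids this bookkeeping, at the cost of using Faridi's theorem as a black box.

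There are two points to repair in your write-up.

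\textbf{The forest assumption.} Drop it; it is neither implicit nor needed.
\begin{itemize}
\item It is not implicit: the whiskered triangle $\langle\{a,b\},\{b,c\},\{a,c\},\{a,y_a\},\{b,y_b\},\{c,y_c\}\rangle$ is a grafting but not a simplicial forest.
\item It is not needed: Faridi's theorem that grafted complexes are Cohen--Macaulay has no forest hypothesis; forests enter only in the converse.
\item You can also see this directly: the chain property of the traces on each leaf makes the facet ideal of any grafted complex the polarization of an Artinian ideal.
\item Unmixedness alone does not give Cohen--Macaulayness, so the vertex-cover count is no substitute.
\end{itemize}

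\textbf{The induction for axiom (5).} You must say why the grafting of $\D(I)\setminus\langle G_i\rangle$ has the same leaves $F_1,\dots,F_r$. In any grafting with at least two facets, the leaves are exactly the facets having a free vertex: axiom (1) gives one inclusion and the definition of a leaf gives the other. Each $F_k$ keeps its free vertices $y$, while no $G_j$ has one, since all its vertices lie in the $F_k$. Hence the restricted blow-up is of the same type, with extra copies only on leaves, and (ii) again gives axiom (1).

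Note also that (ii) is used precisely to secure axiom (1). The leaf/joint correspondence holds without it, contrary to your ``main obstacle'' paragraph.
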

\begin{proof}
    Suppose that $S/I$ is Cohen--Macaulay. Since the ideal $I$ has no embedded primes, by Lemma~\ref{Grafting_Lemma_1} we have $\alpha_I(F)=1$ for every $F\in\mathcal{L}(\Delta(I))$,  and $w(x_i,F)\ge w(x_i,G)$ for every $F\in\mathcal{L}(\Delta(I))$, $G\in\Delta'$, and every vertex $x_i\in F\cap G$. Hence conditions {\rm(i)} and {\rm(ii)} hold.

    Conversely, assume that conditions {\rm(i)} and {\rm(ii)} are satisfied. Write $\D' = \left\langle G_1, \ldots, G_s\right\rangle$ and let $\mathcal{L}(\Delta(I))=\{ F_1, \ldots, F_r\}$. Without loss of generality, assume that $G_i$ is the joint of $F_i$ for each $1\leq i\leq r$. Then $F_i=\{x_{i_1}, \ldots, x_{i_{p_i}}\} \cup \{y_{i_1}, \ldots, y_{i_{q_i}}\}$,  where $F_i\cap G_i= \{x_{i_1}, \ldots, x_{i_{p_i}}\}$, and $y_{i_1}, \ldots, y_{i_{q_i}} \in \Lambda(F_i)$.

    Let $R= \K[u_1, \ldots, u_r]$, and let $K \subseteq R$ be the monomial ideal obtained from $I$ by replacing every vertex of the leaf $F_i$ with the variable $u_i$, for each $1 \le i \le r$. Then $u_i^{M_i}\in K$ for all $1\leq i\leq r$, where $M_i=\sum_{z \in F_i }w(z,F_i)$. Consequently, the ideal $K\subseteq R$ is Artinian. We now consider the polarizarions of the rings $S$ and $R$, and define a map
    \[
    \phi :  S^{\pol} \longrightarrow R^{\pol}
    \] as follows. For each $1\leq i\leq r$, set $T_{i,0} = 0$ and define
    $T_{i,c} = T_{i,c-1}+w(x_{i_c}, F_i)= \sum_{h=1}^{c} w(x_{i_h}, F_i)$ for $1\leq c\leq p_i$. Further, set $T_{i, p_i+1}= T_{i,p_i}+w(y_{i_1}, F_i)$ and $ T_{i, p_i+d}=T_{i,p_i+d-1}+ w(y_{i_d}, F_i)= T_{i,p_i} +\sum_{h=1}^{d}w(y_{i_h}, F_i) $ for $2\leq d \leq q_i$. We now define the $\K$-algebra homomorphism $\phi$ by $\phi(x_{i_j, \ell})= u_{i, \ell + T_{i,{j-1}}}$ for all $1\leq \ell \leq w(x_{i_j}, F_i)$ and $1\leq i\leq r$. The map is well defined since, by condition {\rm(ii)},  $w(x_i,G)\leq w(x_i,F)$ for every $F\in\mathcal{L}(\Delta(I))$, $G\in\Delta'$, and every vertex $x\in F\cap G$. Moreover, $\phi$ is bijective. Moreover, map $\phi$ is  bijective. We claim that $ \phi\big(I^{\pol}\big)= K^{\pol}$. Indeed, by construction, there is a one-to-one correspondence between the monomial generators corresponding to the leaves $F_i$ and the monomials $\left(u_i^{M_i}\right)^{\pol}$. On the other hand, since the vertices of $\D'$ have standard linear weights, and the condition {\rm(ii)} holds, the construction of the map $\phi$ yields a one-to one correspondence between monomial generators corresponding to the facets $G\in \D'$ and the monomials in $K^{\pol}$ that are not pure powers of variables. Hence, $\phi(I^{\pol})=K^{\pol}$, and therefore $\phi$ induces an isomorphism of $\K$-algebras 
    \[
    \phi :  S^{\pol}/I^{\pol} \longrightarrow R^{\pol}/K^\pol .
    \]
    Since $R/K$ is Artinian, it is Cohen--Macaulay. By \cite[Corollary~1.6.3]{herzog2011monomial}, it follows that the ring $R^{\pol}/K^{\pol}$ is Cohen--Macaulay. Since $S^{\pol}/I^{\pol}\cong R^{\pol}/K^{\pol}$, it follows that $S^{\pol}/I^{\pol}$ is Cohen--Macaulay. Consequently, $S/I$ is Cohen--Macaulay.
\end{proof}

The following example shows that the hypothesis that $J$ admits a standard linear weighting is not necessary for $I$ to be Cohen--Macaulay.

\begin{exmp}\label{eg:4CM}
Consider the ideal
\[
    I=\l x_1^2x_2x_3x_4x_5x_6,\, x_1^3x_2x_7x_8,\, x_4x_5x_7x_8,\, x_1^3x_2x_3y_1,\, x_4x_5y_2,\, x_6y_3,\, x_7x_8y_4 \r 
\]
in $\K[x_1,\ldots,x_8,y_1,\ldots,y_4]$. Then $J=\l x_1^2x_2x_3x_4x_5x_6,\, x_1^3x_2x_7x_8,\,
x_4x_5x_7x_8 \r$. Since the weight assigned to the vertex $x_1$ is not uniform, the ideal $J$ does not admit a standard linear weighting. Nevertheless, a computation in Macaulay2 shows that $I$ is Cohen--Macaulay.
\end{exmp}

\section{Support-3 Monomial Ideals}\label{sec:supp3}

In this section, we investigate the Mendez--Pinto--Villarreal conjecture for support-$3$ monomial ideals. A monomial ideal whose minimal generators are of the form $x_i^a x_j^b x_k^c$, where $i,j,k$ are distinct and $a,b,c\geq 1$, is called a support-$3$ monomial ideal. If $I\subseteq S$ is a support-$3$ monomial ideal, then the associated simplicial complex $\Delta(I)$ is a pure simplicial complex of dimension $2$. The Simis property and the packing property have been studied for certain classes of cubic square-free monomial ideals arising from simple graphs \cite{alilooee2021packing}. However, a complete characterization of the Simis property for cubic square-free monomial ideals remains an open problem.

In \cite[Theorem 2.3]{Bordoloi2026}, the authors established the MPV conjecture for the class of support-$2$ monomial ideals. A key ingredient in their proof is that the minimal generators corresponding to a fixed support admit a natural total ordering. For support-$3$ monomial ideals, however, it is unclear whether such a natural ordering exists, even when one considers the total degrees of the generators. More importantly, it is not evident how such an ordering, if it exists, could be exploited to study the Simis property. Instead of pursuing this approach, we compare the supports of the minimal generators directly and analyze the corresponding monomials through their exponent vectors. We begin by introducing the notation and conventions for support-$3$ monomial ideals that will be used throughout the remainder of this section.

\begin{setup}\label{Set-up:Sup_3}
    Let $I\subseteq S$ be a support-$3$ monomial ideal with minimal irreducible primary decomposition, and suppose that $\alpha_I(F)=1$ for every $F\in\mathcal{F}(I)$. Let $F_1,F_2\in\mathcal{F}(I)$ satisfy $|F_1\cap F_2|=2$. After relabeling the variables, we may assume that
    \[
    F_1=\{x_2,x_3,x_4\},
    \qquad
    F_2=\{x_1,x_3,x_4\}.
    \]
    We focus on the vertices $x_3,x_4\in F_1\cap F_2$, and our goal is to prove that they have uniform weights in all generators whose support is contained in $\{x_1,\ldots,x_4\}$. To this end, let $\mathbf{x}^{\mathbf{a}_1},\mathbf{x}^{\mathbf{a}_2}\in\mathcal{G}(I)$ be such that $\supp(\mathbf{x}^{\mathbf{a}_1})=F_1, \supp(\mathbf{x}^{\mathbf{a}_2})=F_2$, and assume that $w(x_3,F_1)<w(x_3,F_2)$. Let
    \[
    J=I\cap \K[x_1,x_2,x_3,x_4].
    \]
    Then $J$ is a support-$3$ monomial ideal in $\K[x_1,x_2,x_3,x_4]$ containing $\mathbf{x}^{\mathbf{a}_1}$ and $\mathbf{x}^{\mathbf{a}_2}$. Since there are exactly four $3$-element subsets of $\{x_1,x_2,x_3,x_4\}$, every minimal generator of $J$ has support $F_i:=\{x_1,x_2,x_3,x_4\}\setminus\{x_i\}, 1\le i\le 4$.
    Accordingly, we have 
    \[
    (\mathbf{x}^{\mathbf{a}_1},\mathbf{x}^{\mathbf{a}_2})\subseteq J\subseteq (\mathbf{x}^{\mathbf{a}_1},\mathbf{x}^{\mathbf{a}_2},
    \mathbf{x}^{\mathbf{a}_3},\mathbf{x}^{\mathbf{a}_4}).
    \]
    If $J=(\mathbf{x}^{\mathbf{a}_1},\mathbf{x}^{\mathbf{a}_2})$, then Proposition~\ref{Prop_Sim_tree} yields
    \[
        w(x_3,F_1)=w(x_3,F_2)
        \quad\text{and}\quad
        w(x_4,F_1)=w(x_4,F_2),
    \]
    and hence the desired conclusion follows. Thus, it remains to consider the cases where $\mathbf{x}^{\mathbf{a}_3}\in J$, $\mathbf{x}^{\mathbf{a}_4}\in J$, or both. Rather than treating these cases separately, we assume that $J=(\mathbf{x}^{\mathbf{a}_1},\mathbf{x}^{\mathbf{a}_2}, \mathbf{x}^{\mathbf{a}_3},\mathbf{x}^{\mathbf{a}_4})$, that is, both $\mathbf{x}^{\mathbf{a}_3}$ and $\mathbf{x}^{\mathbf{a}_4}$ belong to $J$. The remaining cases follow by the same arguments after omitting the generator corresponding to the missing support.
\end{setup}

\par
Next, we establish a number of technical lemmas and propositions needed for the proof of the main theorem. The proofs of these results follow a common strategy. Under suitable hypotheses on the supports, weights, and exponents of the minimal generators, we construct a monomial that lies in the second symbolic power but not in the second ordinary power. Consequently, the ideal is not Simis in degree $2$. Since this argument requires repeated verification of membership in both the symbolic and ordinary powers, we first record several elementary observations that simplify these computations.

\begin{rmk}\label{rmk:minexp}
    Let $I\subseteq S$ be a support-$3$ monomial ideal, and let $f$ be a monomial. We follow the notations and assumptions in Set-up~\ref{Set-up:Sup_3}. Suppose that $\supp(f)\subseteq\{x_1,\ldots,x_4\}$. Then, for every $s\ge1$, $f\in I^s \Leftrightarrow f\in J^s.$ Consequently, to determine whether $f\in I^s$, it is enough to work with the ideal $J$.
    
    Now fix $1\le i\le4$, and write $\{x_1,x_2,x_3,x_4\}\setminus\{x_i\}=\{x_j,x_k,x_\ell\}$. Following Set-up~\ref{Set-up:Sup_3}, let $\mathbf{x}^{\mathbf{a}_i}\in \G(I)$ be such that $\supp(\mathbf{x}^{\mathbf{a}_i})=\{x_1,x_2,x_3,x_4\}\setminus\{x_i\}$. Assume that
    \[
    w(x_i,F_j)\le w(x_i,F_k),\qquad w(x_i,F_j)\le w(x_i,F_\ell),
    \]
    and that the exponent of $x_i$ in $f$ is precisely $w(x_i,F_j)$; that is, $x_i^{w(x_i,F_j)}\mid f, x_i^{w(x_i,F_j)+1}\nmid f$. Comparing the exponent of $x_i$ in the generators of $J$, we conclude that, in order to prove $f\notin J^2$, it suffices to verify that $f\notin \left( \mathbf{x}^{2\mathbf{a}_i}, \mathbf{x}^{\mathbf{a}_i+\mathbf{a}_j}, \mathbf{x}^{\mathbf{a}_i+\mathbf{a}_k}, \mathbf{x}^{\mathbf{a}_i+\mathbf{a}_\ell} \right)$.
\end{rmk}

\begin{rmk}\label{rmk:symbolic}
    For $1\le i\le n$, set $M_i=\max\{w(x_i,F):F\in\mathcal F(I)\}$. Suppose that $x_i^{sM_i}\mid f$ for some monomial $f\in S$.
    Let $P\in\MinAss(I)$, and $Q(P)$ denotes the irreducible primary component corresponding to $P$. If $x_i\in P$, then $x_i^{M_i}\in Q(P)$, and therefore $f\in Q(P)^s$. Consequently, to prove that $f\in I^{(s)}$, it suffices to verify that $f\in Q(P)^s$ for those minimal primes $P\in\MinAss(I)$ with $x_i\notin P$.
\end{rmk}

Throughout the next three lemmas, we adopt the notation and assumptions of Set-up~\ref{Set-up:Sup_3}. We focus on the variables $x_1$ and $x_2$, together with the supports $F_1$ and $F_2$, and investigate the consequences when the standard linear weight condition fails for these variables. The next three lemmas provide a case-by-case analysis of this situation.

\begin{lemma}\label{Prop:Supp_3:Min}
     With the notation and assumptions of Set-up~\ref{Set-up:Sup_3}, suppose that  
     \[
     w(x_1, F_2) \le \min \left\{w(x_1, F_3), w(x_1, F_4) \right\}~~ \text{or}  ~~w(x_2, F_1) \le \min \left\{w(x_2, F_3), w(x_2, F_4) \right\}.
     \]
     Then $I^{(2)} \ne I^2$.
\end{lemma}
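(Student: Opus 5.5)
The plan is to imitate the proof of Proposition~\ref{Prop_Sim_tree}: I will build one monomial $f$ supported on $\{x_1,\ldots,x_4\}$ with $f\in I^{(2)}\setminus I^2$. The two alternatives in the hypothesis are exchanged by the symmetry $x_1\leftrightarrow x_2$, $F_1\leftrightarrow F_2$ (equivalently $\mathbf{x}^{\mathbf{a}_1}\leftrightarrow\mathbf{x}^{\mathbf{a}_2}$, $\mathbf{x}^{\mathbf{a}_3}\leftrightarrow\mathbf{x}^{\mathbf{a}_4}$). This symmetry fixes the pair $\{F_1,F_2\}$ and only needs $w(x_3,F_1)\neq w(x_3,F_2)$, which holds by the Set-up. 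So I will treat only the case $w(x_1,F_2)\le \min\{w(x_1,F_3),w(x_1,F_4)\}$.

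For $i=3,4$ set $m_i=\min\{w(x_i,F_1),w(x_i,F_2)\}$ and $M_i=\max\{w(x_i,F_1),w(x_i,F_2)\}$. Define
\[
f = x_1^{w(x_1,F_2)}\, x_2^{w(x_2,F_1)}\, x_3^{\max\{2m_3,M_3\}}\, x_4^{\max\{2m_4,M_4\}}.
\]

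\emph{Step 1: $f\notin I^2$.} By Remark~\ref{rmk:minexp} it suffices to show $f\notin J^2$. Apply the second part of that remark with $i=1$ and $j=2$. The exponent of $x_1$ in $f$ is exactly $w(x_1,F_2)$, and this is the minimal weight of $x_1$ among $F_2,F_3,F_4$. So it is enough to exclude $\mathbf{x}^{2\mathbf{a}_2}$, $\mathbf{x}^{\mathbf{a}_2+\mathbf{a}_1}$, $\mathbf{x}^{\mathbf{a}_2+\mathbf{a}_3}$ and $\mathbf{x}^{\mathbf{a}_2+\mathbf{a}_4}$ as divisors of $f$.
\begin{enumerate}
\item Each of $\mathbf{x}^{2\mathbf{a}_2}$, $\mathbf{x}^{\mathbf{a}_2+\mathbf{a}_3}$ and $\mathbf{x}^{\mathbf{a}_2+\mathbf{a}_4}$ has $x_1$-exponent strictly larger than $w(x_1,F_2)$, because $x_1\in F_2\cap F_3\cap F_4$. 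So none of them divides $f$.
\item $\mathbf{x}^{\mathbf{a}_1+\mathbf{a}_2}$ does not divide $f$ because of $x_3$. Since $m_3<M_3$ and $m_3\ge1$, we have $w(x_3,F_1)+w(x_3,F_2)=m_3+M_3>\max\{2m_3,M_3\}$.
\end{enumerate}
If $\mathbf{x}^{\mathbf{a}_3}$ or $\mathbf{x}^{\mathbf{a}_4}$ is absent from $J$, the corresponding check is simply dropped, as the Set-up allows.

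\emph{Step 2: $f\in I^{(2)}$.} Let $P\in\MinAss(I)$ and let $Q(P)$ be its irreducible component, so $Q(P)$ is generated by pure powers of variables. Since $\mathbf{x}^{\mathbf{a}_1},\mathbf{x}^{\mathbf{a}_2}\in Q(P)$, there are $x_i\in F_1$ and $x_j\in F_2$ with $x_i^{w(x_i,F_1)}\in Q(P)$ and $x_j^{w(x_j,F_2)}\in Q(P)$.
\begin{enumerate}
\item If $i\neq j$: the exponent of every variable in $f$ is at least its weight in $F_1$ (for $x_2,x_3,x_4$) and in $F_2$ (for $x_1,x_3,x_4$). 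Hence $x_i^{w(x_i,F_1)}x_j^{w(x_j,F_2)}$ divides $f$, and this product lies in $Q(P)^2$.
\item If $i=j$, then $i\in\{3,4\}$. The pure power of $x_i$ in $Q(P)$ has exponent at most $m_i$, and $x_i^{2m_i}$ divides $f$, so again $f\in Q(P)^2$.
\end{enumerate}
Thus $f\in Q(P)^2$ for every $P\in\MinAss(I)$, so $f\in I^{(2)}$. Together with Step 1 this gives $I^{(2)}\neq I^2$.

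The step that actually uses the hypothesis, and the one I expect to need the most care, is Step 1. The products $\mathbf{x}^{\mathbf{a}_2+\mathbf{a}_3}$ and $\mathbf{x}^{\mathbf{a}_2+\mathbf{a}_4}$ could otherwise fit under $f$, because the exponents of $x_3$ and $x_4$ in $f$ are enlarged. The minimality of $w(x_1,F_2)$ is exactly what excludes them. I would verify carefully that Remark~\ref{rmk:minexp} applies with the roles $(i,j)=(1,2)$, so that every product not involving $\mathbf{x}^{\mathbf{a}_2}$ is already excluded by the $x_1$-exponent count.
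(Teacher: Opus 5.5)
Your witness monomial is the right one, and Step~2 is correct. It is the argument of Proposition~\ref{Prop_Sim_tree}, and it works with your exponent $\max\{2m_4,M_4\}$ on $x_4$ just as the paper's $x_4^{2M_4}$ works via Remark~\ref{rmk:symbolic}. The overall approach is essentially the paper's. The gap is in Step~1, where you misapply Remark~\ref{rmk:minexp}.

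\textbf{The misreading.} In that remark, $\mathbf{x}^{\mathbf{a}_i}$ is the generator whose support omits $x_i$. With $i=1$ this is $\mathbf{x}^{\mathbf{a}_1}$ (support $F_1=\{x_2,x_3,x_4\}$), not $\mathbf{x}^{\mathbf{a}_2}$. The $x_1$-count disposes of exactly the products of two generators from $\mathbf{x}^{\mathbf{a}_2},\mathbf{x}^{\mathbf{a}_3},\mathbf{x}^{\mathbf{a}_4}$. What remains to check is $\mathbf{x}^{2\mathbf{a}_1},\mathbf{x}^{\mathbf{a}_1+\mathbf{a}_2},\mathbf{x}^{\mathbf{a}_1+\mathbf{a}_3},\mathbf{x}^{\mathbf{a}_1+\mathbf{a}_4}$.

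\textbf{What goes unchecked.} Your claim that every product not involving $\mathbf{x}^{\mathbf{a}_2}$ is already excluded by the $x_1$-exponent is false. $\mathbf{x}^{2\mathbf{a}_1}$ has $x_1$-exponent $0$. $\mathbf{x}^{\mathbf{a}_1+\mathbf{a}_3}$ and $\mathbf{x}^{\mathbf{a}_1+\mathbf{a}_4}$ have $x_1$-exponent $w(x_1,F_3)$, resp.\ $w(x_1,F_4)$, which may equal $w(x_1,F_2)$. So three of the ten products in $J^2$ are never ruled out, and $f\notin I^2$ is not yet established.

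\textbf{The repair.} It is one line, and it is exactly how the paper handles these products. We have $x_2\in F_1\cap F_3\cap F_4$, and the $x_2$-exponent of $f$ is exactly $w(x_2,F_1)$. For $q\in\{1,3,4\}$, $\mathbf{x}^{\mathbf{a}_1+\mathbf{a}_q}$ has $x_2$-exponent $w(x_2,F_1)+w(x_2,F_q)>w(x_2,F_1)$, so none of these divides $f$.

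\textbf{Where the hypothesis enters.} Your account of this is also off. $\mathbf{x}^{\mathbf{a}_2+\mathbf{a}_3}$ and $\mathbf{x}^{\mathbf{a}_2+\mathbf{a}_4}$ have $x_1$-exponent $w(x_1,F_2)+w(x_1,F_q)>w(x_1,F_2)$ with no hypothesis at all. The minimality of $w(x_1,F_2)$ is needed only to exclude $\mathbf{x}^{2\mathbf{a}_3}$, $\mathbf{x}^{\mathbf{a}_3+\mathbf{a}_4}$ and $\mathbf{x}^{2\mathbf{a}_4}$. Their $x_1$-exponents are at least $2w(x_1,F_2)$ only because of it.

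\textbf{A slip in the symmetry.} Under $x_1\leftrightarrow x_2$, the sets $F_3=\{x_1,x_2,x_4\}$ and $F_4=\{x_1,x_2,x_3\}$ are each fixed, not interchanged. The reduction to one alternative is still valid. It is in fact unnecessary: under the second alternative the same $f$ works, with the $x_2$-exponent excluding $\mathbf{x}^{2\mathbf{a}_3},\mathbf{x}^{\mathbf{a}_3+\mathbf{a}_4},\mathbf{x}^{2\mathbf{a}_4}$. Your symmetric use of $m_3,M_3$ also means you never need the direction of the inequality $w(x_3,F_1)<w(x_3,F_2)$.
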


\begin{proof}
    Suppose that $w(x_1, F_2) \le \min \left\{w(x_1, F_3), w(x_1, F_4) \right\}$. Consider the monomial
    \[
        f = x_1^{w(x_1,F_2)}x_2^{w(x_2,F_1)}x_3^{\max\left\{2w(x_3,F_1),w(x_3,F_2) \right\}}x_4^{2M_4}.
    \]
    We claim that $f \in I^{(2)} \setminus I^2$. We first show that $f \notin I^2$. By Remark~\ref{rmk:minexp}, it suffices to verify $f \notin \l \mathbf{x}^{2\mathbf{a}_1}, \mathbf{x}^{\mathbf{a}_1+\mathbf{a}_2},\mathbf{x}^{\mathbf{a}_1+\mathbf{a}_3}, \mathbf{x}^{\mathbf{a}_1+\mathbf{a}_4}\r$. We obtain the desired non-divisibilities from
    \begin{align*}
        w(x_2,F_1)&<2w(x_2,F_1),\\
        \max\{2w(x_3,F_1),w(x_3,F_2)\}&<w(x_3,F_1)+w(x_3,F_2),\\
        w(x_2,F_1)&<w(x_2,F_1)+w(x_2,F_3),\\
        w(x_2,F_1)&<w(x_2,F_1)+w(x_2,F_4).
    \end{align*}
    Here, the second inequality follows from the assumption $w(x_3,F_1)<w(x_3,F_2)$ in Set-up~\ref{Set-up:Sup_3}.
    Next, we show that $f \in I^{(2)}$. As $x_4^{2M_4} \mid f$, by Remark~\ref{rmk:symbolic} it is enough to show that $f \in Q(P)^2$ for those $P \in \MinAss(I)$ such that $x_4 \notin P$. Hence, we consider the following possibilities.
    If $x_{3}^{w(x_3,F_1)} \in Q(P)$, then $x_{3}^{2w(x_3,F_1)} \in Q(P)^2$. Since $x_{3}^{2w(x_3,F_1)}\mid f_1$, we have $f_1 \in Q(P)^2$.
    Now, suppose that $x_3^{w(x_3,F_1)} \notin Q(P)$.
    Since the irreducible decomposition of $I$ is minimal, this implies that $x_2^{w(x_2,F_1)} \in Q(P)$. 
    Furthermore, if $x_{1}^{w(x_1,F_2)}\in Q(P)$, then $x_{1}^{w(x_1,F_2)}x_2^{w(x_2,F_1)} \in Q(P)^2$.
    As $x_{1}^{w(x_1,F_2)}x_2^{w(x_2,F_1)} \mid f$, it implies that $f \in Q(P)^2$.
    On the other hand, if $x_{1}^{w(x_1,F_2)}\notin Q(P)$, then $x_{3}^{w(x_3,F_2)}\in Q(P)$ as the irreducible decomposition of $I$ is minimal.
    Thus, we have $x_{2}^{w(x_2,F_1)}x_3^{w(x_3,F_2)} \in Q(P)^2$. Hence, $f \in Q(P)^2$ and therefore $f \in I^{(2)}$. The proof for the case $w(x_2, F_1) \le \min \left\{w(x_2, F_3), w(x_2, F_4)\right\}$ is analogous and is therefore ~omitted.
\end{proof}

\begin{lemma} \label{Prop:Supp_3:Intermediate}
     With the notation and assumptions of Set-up~\ref{Set-up:Sup_3}, suppose that one of the following conditions holds: 
    \begin{enumerate}[\rm(i)]
        \item $w(x_1,F_3)< w(x_1,F_2) \le w(x_1,F_4)$ \text{and} $w(x_2,F_4) < w(x_2,F_1) \le w(x_2,F_3)$.
        \item $\max\{w(x_1,F_3) , w(x_1,F_4)\} < w(x_1,F_2)~ \text{and} ~w(x_2,F_4)< w(x_2,F_1) \le w(x_2,F_3)$.
        \item $ w(x_1,F_4) < w(x_1,F_2) \le w(x_1,F_3) ~\text{and} ~w(x_2,F_4) < w(x_2,F_1) \le w(x_2,F_3)$.
    \end{enumerate}
    Then $I^{(2)} \ne I^2$.
\end{lemma}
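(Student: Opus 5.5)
The plan is to follow the template of Lemma~\ref{Prop:Supp_3:Min}: in each of the cases (i)--(iii) I will exhibit an explicit monomial $f\in I^{(2)}\setminus I^2$ with $\supp(f)\subseteq\{x_1,\dots,x_4\}$. By Remark~\ref{rmk:minexp}, membership in $I^2$ reduces to membership in $J^2$, where $J=(\x^{\mathbf{a}_1},\x^{\mathbf{a}_2},\x^{\mathbf{a}_3},\x^{\mathbf{a}_4})$. The common feature of the three cases is the $x_2$-condition $w(x_2,F_4)<w(x_2,F_1)\le w(x_2,F_3)$. So $\x^{\mathbf{a}_4}$, not $\x^{\mathbf{a}_1}$, carries the smallest power of $x_2$, and the reduction in Remark~\ref{rmk:minexp} must be applied through $x_1$ or $x_3$ instead of $x_2$.

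My first candidate, in all three cases, is
\[
f=x_1^{e_1}\,x_2^{w(x_2,F_1)}\,x_3^{\max\{2w(x_3,F_1),\,w(x_3,F_2)\}}\,x_4^{2M_4}.
\]
Here $e_1=w(x_1,F_2)$ in case (i). In cases (ii) and (iii) I would instead take $e_1=\max\{w(x_1,F_2),\,w(x_1,F_3)+w(x_1,F_4)\}$, adjusted so as not to exceed what is needed. The factor $x_4^{2M_4}$ is chosen so that, by Remark~\ref{rmk:symbolic}, only minimal primes $P$ with $x_4\notin P$ need to be checked.

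For $f\in I^{(2)}$, I argue as in Lemma~\ref{Prop:Supp_3:Min}. Since the irreducible decomposition is minimal, each irreducible component $Q(P)$ contains a pure power $x_i^{w}$ dividing the generator of $F_1$ (respectively $F_2$) with $w\le w(x_i,F_1)$ (respectively $w\le w(x_i,F_2)$). If $x_3^{w(x_3,F_1)}\in Q(P)$, then $x_3^{2w(x_3,F_1)}\mid f$ finishes the argument. Otherwise $x_2^{w(x_2,F_1)}\in Q(P)$. It then remains to produce a second factor from $F_2$: either $x_1^{w(x_1,F_2)}\mid f$, which holds since $e_1\ge w(x_1,F_2)$, or $x_3^{w(x_3,F_2)}\mid f$.

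For $f\notin J^2$, I check the ten products $\x^{\mathbf{a}_i+\mathbf{a}_j}$ one by one. The following are immediate:
\begin{itemize}
\item $\x^{2\mathbf{a}_1}$, $\x^{\mathbf{a}_1+\mathbf{a}_3}$ and $\x^{\mathbf{a}_1+\mathbf{a}_4}$ fail on $x_2$, because the exponent of $x_2$ in $f$ is $w(x_2,F_1)$.
\item $\x^{\mathbf{a}_1+\mathbf{a}_2}$ fails on $x_3$, because $w(x_3,F_1)<w(x_3,F_2)$.
\item $\x^{2\mathbf{a}_2}$, $\x^{\mathbf{a}_2+\mathbf{a}_4}$ and $\x^{\mathbf{a}_2+\mathbf{a}_3}$ should fail on $x_1$ or $x_3$, using $e_1<w(x_1,F_2)+\min$ and the $x_3$-exponent bound.
\end{itemize}

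The delicate products are $\x^{2\mathbf{a}_4}$, $\x^{\mathbf{a}_3+\mathbf{a}_4}$ and $\x^{2\mathbf{a}_3}$, whose $x_2$-exponents, for example $2w(x_2,F_4)$, may fit under $w(x_2,F_1)$. These must be blocked by $x_1$ or $x_3$. In case (i), $w(x_1,F_2)\le w(x_1,F_4)$ gives $2w(x_1,F_4)>e_1$ and $w(x_1,F_3)+w(x_1,F_4)>e_1$, so $x_1$ blocks them; $\x^{2\mathbf{a}_3}$ is blocked by $x_2$, since $2w(x_2,F_3)\ge 2w(x_2,F_1)$. In cases (ii) and (iii), $x_1$ no longer blocks these products. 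Here I would split further according to whether $2w(x_2,F_4)\le w(x_2,F_1)$ and compare $w(x_3,F_4)$ with $w(x_3,F_1)$. If $x_3$ fails to block them, I would lower the $x_3$-exponent or $x_1$-exponent of $f$ to $w(x_3,F_4)+w(x_3,F_3)-1$ (respectively $w(x_1,F_3)+w(x_1,F_4)-1$), and recheck symbolic membership using the alternative generator $\x^{\mathbf{a}_4}$ in place of $\x^{\mathbf{a}_2}$ when choosing pure powers in $Q(P)$.

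The main obstacle is this balancing act in cases (ii) and (iii). The exponents of $f$ must be small enough to exclude $\x^{2\mathbf{a}_4}$ and $\x^{\mathbf{a}_3+\mathbf{a}_4}$, yet large enough that every component $Q(P)$ with $x_4\notin P$ still contains a product of two pure powers dividing $f$. I expect to need a subcase distinction on the $x_3$-weights there. The inequality $w(x_2,F_1)\le w(x_2,F_3)$ will be used to exclude $\x^{2\mathbf{a}_3}$ and $\x^{\mathbf{a}_1+\mathbf{a}_3}$.
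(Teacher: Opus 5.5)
Your case (i) is correct and is exactly the paper's argument: the same monomial $x_1^{w(x_1,F_2)}x_2^{w(x_2,F_1)}x_3^{\max\{2w(x_3,F_1),w(x_3,F_2)\}}x_4^{2M_4}$, the same blocking variables and the same symbolic check.

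\textbf{Cases (ii) and (iii) are not proved.}

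\emph{The candidate fails.} Your symbolic argument relies on $e_1\ge w(x_1,F_2)$, on $x_2$-exponent $w(x_2,F_1)$, and on $x_3$-exponent $\max\{2w(x_3,F_1),w(x_3,F_2)\}$. With these exponents, $\mathbf{x}^{2\mathbf{a}_4}$ divides $f$ whenever $2w(x_1,F_4)\le w(x_1,F_2)$, $2w(x_2,F_4)\le w(x_2,F_1)$ and $w(x_3,F_4)\le w(x_3,F_1)$. This really occurs. Take $I=(x_2^2x_3x_4,\,x_1^2x_3^2x_4,\,x_1x_2^2x_4,\,x_1x_2x_3,\,x_1x_5x_6,\,x_2x_7x_8)$. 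It satisfies Set-up~\ref{Set-up:Sup_3} and (ii): its irredundant irreducible components have pairwise distinct radicals. Here your $f$ is $x_1^2x_2^2x_3^2x_4^2\in I^2$. (By contrast, $\mathbf{x}^{\mathbf{a}_3+\mathbf{a}_4}$ is never a problem: it is blocked on $x_2$ because $w(x_2,F_3)\ge w(x_2,F_1)$.)

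\emph{Case (iii) is worse.} Your choice there is $e_1=w(x_1,F_3)+w(x_1,F_4)\ge w(x_1,F_2)+w(x_1,F_4)$. This breaks a bullet you call immediate: $\mathbf{x}^{\mathbf{a}_2+\mathbf{a}_4}$ is no longer blocked on $x_1$.

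\emph{The listed repairs do not close the gap.}
\begin{itemize}
\item The quantity $w(x_3,F_3)$ is undefined, since $F_3=\{x_1,x_2,x_4\}$.
\item Lowering $e_1$ to $w(x_1,F_3)+w(x_1,F_4)-1$ still allows $\mathbf{x}^{2\mathbf{a}_4}\mid f$ when $w(x_1,F_3)>w(x_1,F_4)$, which always holds in (iii).
\item Once $e_1<w(x_1,F_2)$, the branch of your symbolic argument that uses $x_1^{w(x_1,F_2)}$ is lost, and you do not say what replaces it.
\end{itemize}

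\textbf{The missing step.} Let $\mathbf{x}^{\mathbf{a}_4}$, rather than $\mathbf{x}^{\mathbf{a}_2}$, supply the second pure power, and push the $x_1$-exponent all the way down to $w(x_1,F_4)$. Your split on $2w(x_2,F_4)$ versus $w(x_2,F_1)$ is the right one.

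\emph{If $2w(x_2,F_4)>w(x_2,F_1)$:} the case-(i) monomial works verbatim in both (ii) and (iii), because $\mathbf{x}^{2\mathbf{a}_4}$ is now blocked on $x_2$.

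\emph{If $2w(x_2,F_4)\le w(x_2,F_1)$:} take $x_1^{w(x_1,F_4)}x_2^{w(x_2,F_1)}x_3^{2M_3}x_4^{2M_4}$. By Remark~\ref{rmk:symbolic}, only primes with $x_3,x_4\notin P$ matter. For these, $\mathbf{x}^{\mathbf{a}_1}$ forces $x_2^{w(x_2,F_1)}\in Q(P)$, and $\mathbf{x}^{\mathbf{a}_4}$ forces $x_1^{w(x_1,F_4)}$ or $x_2^{w(x_2,F_4)}$ into $Q(P)$.

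\textbf{The paper's witnesses.} In (ii) the paper handles both sub-cases at once with
$f_2=x_1^{w(x_1,F_4)}x_2^{\max\{2w(x_2,F_4),w(x_2,F_1)\}}x_3^{2M_3}x_4^{2M_4}$.
The six products involving $\mathbf{a}_2$ or $\mathbf{a}_4$, other than $\mathbf{a}_1+\mathbf{a}_4$, exceed $f_2$ in $x_1$. The remaining products $\mathbf{x}^{2\mathbf{a}_1}$, $\mathbf{x}^{\mathbf{a}_1+\mathbf{a}_3}$, $\mathbf{x}^{\mathbf{a}_1+\mathbf{a}_4}$, $\mathbf{x}^{2\mathbf{a}_3}$ exceed it in $x_2$.

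For (iii) the paper uses the mirror witness $x_1^{\max\{2w(x_1,F_4),w(x_1,F_2)\}}x_2^{w(x_2,F_4)}x_3^{2M_3}x_4^{2M_4}$, pairing $\mathbf{x}^{\mathbf{a}_2}$ with $\mathbf{x}^{\mathbf{a}_4}$ along $x_1$. In fact $f_2$ also works in (iii), since its proof uses only $w(x_1,F_4)<w(x_1,F_2)$ and $w(x_2,F_4)<w(x_2,F_1)\le w(x_2,F_3)$.
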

\begin{proof}
    In each of the given cases, we shall construct a suitable monomial and make repeated use of the Remark~\ref{rmk:minexp} and Remark~\ref{rmk:symbolic} and to conclude the assertion.
    
    \smallskip
    \noindent (i) Consider the monomial,
    \[
    f_1 = x_1^{w(x_1,F_2)}x_2^{w(x_2,F_1)}x_3^{\max\left\{2w(x_3,F_1),w(x_3,F_2) \right\}}x_4^{2M_4}.
    \]
    We claim that $f_1 \in I^{(2)} \setminus I^2$. We first show that $f_1 \notin I^2$. Note that for $i \in \{1,3,4\}$, $w(x_2,F_i) \ge 1$. Hence, it follows that $f_1 \notin \l \mathbf{x}^{2\mathbf{a}_1}, \mathbf{x}^{\mathbf{a}_1+\mathbf{a}_3} ,\mathbf{x}^{\mathbf{a}_1+\mathbf{a}_4} \r$. Again, for $i \in \{2,3,4\}$, $w(x_1,F_i) \ge 1$. Thus, we obtain $f_1 \notin \l \mathbf{x}^{2\mathbf{a}_2}, \mathbf{x}^{\mathbf{a}_2+\mathbf{a}_3},\mathbf{x}^{\mathbf{a}_2+\mathbf{a}_4} \r$. By the assumption $w(x_3,F_1)<w(x_3,F_2)$ in Set-up~\ref{Set-up:Sup_3}, we obtain $\max\{2w(x_3, F_1),w(x_3,F_2)\} < w(x_3, F_1) + w(x_3, F_2)$. Thus, $\mathbf{x}^{\mathbf{a}_1+\mathbf{a}_2} \nmid f_1$. For the remaining generators observe that
    \begin{align*}
        w(x_2,F_1)&<2w(x_2,F_3),\\
        w(x_2,F_1)&<w(x_2,F_3)+w(x_2,F_4),\\
        w(x_1,F_2)&<2w(x_1,F_4).
    \end{align*}
    These inequalities yield, respectively, $f_1 \notin \l\mathbf{x}^{2\mathbf{a}_3},\mathbf{x}^{\mathbf{a}_3+\mathbf{a}_4}, \mathbf{x}^{2\mathbf{a}_4}\r$. Hence, $f_1 \notin I^2$. Arguing exactly as in the proof that $f\in I^{(2)}$ in Lemma~\ref{Prop:Supp_3:Min}, we obtain $f_1\in I^{(2)}$. Therefore $f_1 \in I^{(2)} \setminus I^2$.
    
    \medskip
    
    \noindent (ii)
    For this case, consider the monomial 
    \[
    f_2 = x_1^{w(x_1,F_4)}x_2^{\max\left\{2w(x_2,F_4),w(x_2,F_1)\right\}}x_3^{2M_3}x_4^{2M_4}.
    \]
    Firstly, we show that $f_2 \notin I^2$. Note that for $i \in \{2,3,4\}$, $w(x_1,F_i) \ge 1$. Thus, it follows that $f_2 \notin \l\mathbf{x}^{\mathbf{a}_2+\mathbf{a}_4},\mathbf{x}^{\mathbf{a}_3+\mathbf{a}_4},\mathbf{x}^{2\mathbf{a}_4} \r$. Next, since $w(x_1,F_4) < w(x_1,F_2)$, we obtain $f_2 \notin \l \mathbf{x}^{\mathbf{a}_1+\mathbf{a}_2}, \mathbf{x}^{2\mathbf{a}_2},\mathbf{x}^{\mathbf{a}_2+\mathbf{a}_3} \r$. It remains to show that $f_2 \notin \l\mathbf{x}^{2\mathbf{a}_1}, \mathbf{x}^{\mathbf{a}_1+\mathbf{a}_4}, \mathbf{x}^{\mathbf{a}_1+\mathbf{a}_3}, \mathbf{x}^{2\mathbf{a}_3}\r$. However, the required non-divisibilities follow from the following exponent comparisons:
    \begin{align*}
        2w(x_2,F_4)&<2w(x_2,F_1),\\
        2w(x_2,F_4)&<w(x_2,F_1)+w(x_2,F_4),\\
        2w(x_2,F_4)&<w(x_2,F_1)+w(x_2,F_3),\\
        \max\{2w(x_2,F_4),w(x_2,F_1)\} &<2w(x_2,F_3).
    \end{align*} 
    Next, we show that $f_2\in I^{(2)}$. Let $P\in\MinAss(I)$ with $x_3,x_4\notin P$. First, assume that $x_2^{w(x_2,F_4)} \in Q(P)$. Then it is evident that $f_2 \in Q(P)^2$ as $x_2^{2w(x_2,F_4)} \mid f_2$. Next, if $x_2^{w(x_2,F_4)} \notin Q(P)$, since the irreducible decomposition of $I$ is minimal, it follows that $x_1^{w(x_1,F_4)} \in Q(P)$. Note that we also have $x_2^{w(x_2,F_1)} \in Q(P)$ as $x_3, x_4 \notin P$ and $F_1 = \{x_2,x_3,x_4\}$. Consequently, $x_1^{w(x_1,F_4)}x_2^{w(x_2,F_1)} \in Q(P)^2$ and hence $f_2 \in Q(P)^2$.  By using Remark~\ref{rmk:symbolic}, we conclude that $f_2 \in Q(P)^2$ for every $P \in \MinAss(I)$. This proves that $f_2 \in I^{(2)}$.
    
    \medskip
    \noindent (iii) We construct the monomial
    \[
    f_3 = x_1^{\max\left\{2w(x_1,F_4),w(x_1,F_2)\right\}}x_2^{w(x_2,F_4)}x_3^{2M_3}x_4^{2M_4},
    \]
    in order to show that $I^{(2)}\neq I^2$.
    By Remark~\ref{rmk:minexp}, to show $f_3 \notin I^2$, it is enough to verify that $f_3 \notin \l \mathbf{x}^{\mathbf{a}_1 + \mathbf{a}_2},\mathbf{x}^{2\mathbf{a}_2},\mathbf{x}^{\mathbf{a}_2+\mathbf{a}_3}, \mathbf{x}^{\mathbf{a}_2+\mathbf{a}_4} \r$. 
    Consider the following exponent comparisons:
    \begin{align*}
        \max\{2w(x_1,F_4),w(x_1,F_2)\}&<2w(x_1,F_2),\\
        2w(x_1,F_4)&<w(x_1,F_2)+w(x_1,F_4),\\
        w(x_2,F_4)&<\min\{w(x_2,F_1),w(x_2,F_3)\}.
    \end{align*}
    The first two inequalities yield $\mathbf{x}^{2\mathbf{a}_2}\nmid f_3$ and $\mathbf{x}^{\mathbf{a}_2+\mathbf{a}_4}\nmid f_3$ respectively, while the last inequality shows that $f_3 \notin \l \mathbf{x}^{\mathbf{a}_1+\mathbf{a}_2},  \mathbf{x}^{\mathbf{a}_2+\mathbf{a}_3}\r$. Hence, $f_3\notin I^2$. We next verify that $f_3\in I^{(2)}$. Firstly, assume that $P\in\MinAss(I)$ with $x_3,x_4\notin P$. We consider the following cases. If $x_1^{w(x_1,F_4)} \in Q(P)$, then $x_1^{2w(x_1,F_4)} \in Q(P)^2$, and since     $x_1^{2w(x_1,F_4)} \mid f_3$ we obtain $f_3 \in Q(P)^2$. Otherwise, $x_1^{w(x_1,F_4)}\notin Q(P)$ and the minimality of the irreducible decomposition of $I$ implies that $x_2^{w(x_2,F_4)} \in Q(P)$. Moreover since $x_3,x_4\notin P$ and $F_2=\{x_1,x_3,x_4\}$, we have $x_1^{w(x_1,F_2)}\in Q(P)$. Consequently,  $x_2^{w(x_2,F_4)}x_1^{w(x_1,F_2)} \in Q(P)^2$. As $x_2^{w(x_2,F_4)}x_1^{w(x_1,F_2)}\mid f_3$, we again obtain $f_3\in Q(P)^2$. Next, as $x_3^{2M_3}x_4^{2M_4} \mid f_3$, using Remark~\ref{rmk:symbolic}, we obtain  $f_3\in Q(P)^2$ for every $P \in \MinAss(I)$. Thus, $f_3\in I^{(2)}$.               
\end{proof}

\begin{lemma} \label{Prop:Supp_3:Max}
    With the notation and assumptions of of Set-up~\ref{Set-up:Sup_3}, suppose that 
    \[
    \max\{w(x_1,F_3),w(x_1,F_4)\} < w(x_1,F_2) \quad  \text{and} \quad  \max\{w(x_2,F_3), w(x_2,F_4) \}< w(x_2,F_1),
    \]
    then $I^{(2)} \ne I^2$.
\end{lemma}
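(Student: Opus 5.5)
The plan is to follow the same strategy as Lemmas~\ref{Prop:Supp_3:Min} and~\ref{Prop:Supp_3:Intermediate}: exhibit an explicit monomial $f$ with $\supp(f)\subseteq\{x_1,\dots,x_4\}$ and show $f\in I^{(2)}\setminus I^2$. By Remark~\ref{rmk:minexp}, membership in $I^2$ may be tested in $J^2$, where $J=(\mathbf{x}^{\mathbf{a}_1},\dots,\mathbf{x}^{\mathbf{a}_4})$ with $F_3=\{x_1,x_2,x_4\}$ and $F_4=\{x_1,x_2,x_3\}$. By Remark~\ref{rmk:symbolic}, giving a variable exponent $2M_i$ means only the minimal primes avoiding that variable need checking.

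The first candidate is the monomial of Lemma~\ref{Prop:Supp_3:Min},
\[
f=x_1^{w(x_1,F_2)}x_2^{w(x_2,F_1)}x_3^{\max\{2w(x_3,F_1),w(x_3,F_2)\}}x_4^{2M_4}.
\]
Symbolic membership is proved exactly as there. For $P$ with $x_4\notin P$: if $x_3^{w(x_3,F_1)}\in Q(P)$ we are done. Otherwise $x_2^{w(x_2,F_1)}\in Q(P)$, and then either $x_1^{w(x_1,F_2)}\in Q(P)$ or $x_3^{w(x_3,F_2)}\in Q(P)$, each giving a product of two component generators dividing $f$. For non-membership in $J^2$, the products $\mathbf{x}^{\mathbf{a}_1+\mathbf{a}_2}$, $\mathbf{x}^{2\mathbf{a}_1}$, $\mathbf{x}^{2\mathbf{a}_2}$, $\mathbf{x}^{\mathbf{a}_1+\mathbf{a}_j}$, $\mathbf{x}^{\mathbf{a}_2+\mathbf{a}_j}$ ($j=3,4$) are excluded by the exponent of $x_3$, $x_2$, or $x_1$ respectively. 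The products are $\mathbf{x}^{\mathbf{a}_1+\mathbf{a}_2}$ by $x_3$; $\mathbf{x}^{2\mathbf{a}_1}$ and $\mathbf{x}^{\mathbf{a}_1+\mathbf{a}_j}$ by $x_2$; and $\mathbf{x}^{2\mathbf{a}_2}$ and $\mathbf{x}^{\mathbf{a}_2+\mathbf{a}_j}$ by $x_1$.

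The remaining products $\mathbf{x}^{2\mathbf{a}_3}$, $\mathbf{x}^{\mathbf{a}_3+\mathbf{a}_4}$, $\mathbf{x}^{2\mathbf{a}_4}$ are the real obstacle. Under the present hypothesis the $x_1$- and $x_2$-exponents of $f$ are maximal among the four generators. So such a product divides $f$ precisely when the relevant sums, for instance $w(x_1,F_3)+w(x_1,F_4)\le w(x_1,F_2)$ and $w(x_2,F_3)+w(x_2,F_4)\le w(x_2,F_1)$, hold simultaneously.

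I would therefore split into cases. If none of these three products divides $f$, we are done. If one does, I would lower the $x_1$- or $x_2$-exponent of $f$, replacing $x_1^{w(x_1,F_2)}$ by $x_1^{\max\{2w(x_1,F_3),\,w(x_1,F_3)+w(x_1,F_4),\,2w(x_1,F_4)\}-1}$-type truncations. Alternatively I would switch to a monomial built like $f_2,f_3$ in Lemma~\ref{Prop:Supp_3:Intermediate}, for example
\[
x_1^{\max\{2w(x_1,F_4),\,w(x_1,F_3)\}}\,x_2^{\max\{2w(x_2,F_3),\,w(x_2,F_4)\}}\,x_3^{2M_3}x_4^{2M_4},
\]
centred on the pair $(F_3,F_4)$, which also meet in $\{x_1,x_2\}$. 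The idea is that when the small generators $\mathbf{x}^{\mathbf{a}_3},\mathbf{x}^{\mathbf{a}_4}$ are "cheap" in $x_1,x_2$, they themselves play the role of $F_1,F_2$. Non-uniformity then transfers to them, since $w(x_1,F_3),w(x_1,F_4)<w(x_1,F_2)$. Non-membership in $J^2$ is then forced by $\mathbf{x}^{\mathbf{a}_1},\mathbf{x}^{\mathbf{a}_2}$ having strictly larger $x_2$- and $x_1$-exponents.

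For the symbolic side in the fallback, take $P$ with $x_3,x_4\notin P$. Minimality of the irreducible decomposition forces $Q(P)$ to contain powers $x_1^{c_1}$ and $x_2^{c_2}$ with $c_1\le w(x_1,F_2)$, $c_2\le w(x_2,F_1)$, together with $c_1\le w(x_1,F_j)$ or $c_2\le w(x_2,F_j)$ for $j=3,4$. One then checks $f\in Q(P)^2$ by the same "either $x_i^{2c_i}$ or $x_1^{c_1}x_2^{c_2}$" dichotomy. I expect this balancing to be the hardest step: choosing exponents small enough to avoid $J^2$ yet large enough that every component with $x_3,x_4\notin P$ still captures $f$ in its square. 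I would resolve it by ordering $w(x_1,F_3)$ against $w(x_1,F_4)$, and $w(x_2,F_3)$ against $w(x_2,F_4)$, giving four subcases handled symmetrically.
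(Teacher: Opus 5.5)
Your framework is the same as the paper's: a witness carrying $x_3^{2M_3}x_4^{2M_4}$, Remark~\ref{rmk:minexp} for $f\notin J^2$, the dichotomy ``$x_i^{2c_i}$ or $x_1^{c_1}x_2^{c_2}$'' for components with $x_3,x_4\notin P$, and a split by how $w(x_1,F_3),w(x_1,F_4)$ and $w(x_2,F_3),w(x_2,F_4)$ are ordered. But the decisive step, a witness that works in \emph{every} configuration, is left open, and your one concrete fallback does not cover all of them.

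Your $(F_3,F_4)$-centred monomial $g=x_1^{\max\{2w(x_1,F_4),w(x_1,F_3)\}}x_2^{\max\{2w(x_2,F_3),w(x_2,F_4)\}}x_3^{2M_3}x_4^{2M_4}$ is exactly the paper's witness in Subcase~2A. That subcase has opposite orders ($w(x_1,F_4)<w(x_1,F_3)$, $w(x_2,F_3)<w(x_2,F_4)$) together with $2w(x_2,F_3)<w(x_2,F_1)$. Outside it, $g$ fails:
\begin{itemize}
\item Suppose the orders point the same way, $w(x_1,F_3)\le w(x_1,F_4)$ and $w(x_2,F_3)\le w(x_2,F_4)$. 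This includes $x_1,x_2$ being uniform on $F_3,F_4$, where there is no non-uniformity to ``transfer''. Then the $x_1$- and $x_2$-exponents of $g$ are at least $2w(x_1,F_3)$ and $2w(x_2,F_3)$, and $x_4^{2M_4}$ covers the rest, so $\mathbf{x}^{2\mathbf{a}_3}\mid g$.
\item With opposite orders, $\mathbf{x}^{\mathbf{a}_1+\mathbf{a}_2}\mid g$ as soon as $w(x_1,F_2)\le 2w(x_1,F_4)$ and $w(x_2,F_1)\le 2w(x_2,F_3)$. For example, $(w(x_1,F_2),w(x_1,F_3),w(x_1,F_4))=(4,3,2)$ and $(w(x_2,F_1),w(x_2,F_3),w(x_2,F_4))=(4,2,3)$ give $g=x_1^4x_2^4x_3^{2M_3}x_4^{2M_4}\in J^2$.
\end{itemize}
The available symmetries (swapping $F_3\leftrightarrow F_4$ with $x_3\leftrightarrow x_4$, or $x_1\leftrightarrow x_2$ with $F_1\leftrightarrow F_2$) never turn a same-order configuration into an opposite-order one. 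So the four subcases are not ``handled symmetrically'' by one pattern. The truncation idea is too vague to check. Lowering the $x_1$-exponent below $w(x_1,F_2)$ means a component with $x_3,x_4\notin P$ whose $x_1$-power exceeds the new exponent can only be handled by $x_2^{2c_2}\mid f$.

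The missing idea is to put one of $x_1,x_2$ at its \emph{minimal} exponent among the generators containing it. Then Remark~\ref{rmk:minexp} reduces non-membership to the four products involving a single generator. The paper's witnesses are:
\begin{itemize}
\item Same-order case: $x_1^{w(x_1,F_3)}x_2^{\max\{2w(x_2,F_3),w(x_2,F_1)\}}x_3^{2M_3}x_4^{2M_4}$. Only products with $\mathbf{x}^{\mathbf{a}_1}$ matter, and the $x_1$- or $x_2$-exponent excludes them. For $x_3,x_4\notin P$, either $x_2^{w(x_2,F_3)}\in Q(P)$ or $x_1^{w(x_1,F_3)}x_2^{w(x_2,F_1)}\in Q(P)^2$.
\item Opposite-order case with $w(x_2,F_1)\le 2w(x_2,F_3)$: $x_1^{w(x_1,F_4)}x_2^{2w(x_2,F_4)}x_3^{2M_3}x_4^{2M_4}$. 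Its symbolic membership rests on $w(x_2,F_1)\le 2w(x_2,F_3)<2w(x_2,F_4)$.
\end{itemize}
The other order patterns follow by swapping $F_3$ and $F_4$. Without these constructions your proposal does not prove the lemma.
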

\begin{proof}
    The proof proceeds by a case analysis. The cases below are mutually exclusive and collectively exhaustive. In each case, we exhibit a monomial $f\in I^{(2)}\setminus I^2$.

    \noindent
    \textsc{Case 1.} Assume that $ w(x_1,F_3) \le w(x_1,F_4)$ and $ w(x_2,F_3) \le w(x_2,F_4)$. We consider the ~monomial 
    \[
    f_1 = x_1^{w(x_1,F_3)}x_2^{\max\{2w(x_2,F_3),w(x_2,F_1)\}}x_3^{2M_3}x_4^{2M_4}.
    \]
    In view of Remark~\ref{rmk:minexp}, to show $f_1 \notin I^2$, we verify that $f_1 \notin \l \mathbf{x}^{2\mathbf{a}_1}, \mathbf{x}^{\mathbf{a}_1+\mathbf{a}_2}, \mathbf{x}^{\mathbf{a}_1+\mathbf{a}_3}, \mathbf{x}^{\mathbf{a}_1+\mathbf{a}_4}\r$. Indeed, the required non-divisibilities follow from the corresponding exponent comparisons:
    \begin{align*}
        2w(x_2,F_3) &< 2w(x_2,F_1),\\
        w(x_1,F_3) &< w(x_1,F_2),\\
        \max\{2w(x_2,F_3),w(x_2,F_1)\}&<w(x_2,F_1)+w(x_2,F_3),\\
        2w(x_2,F_3)&<w(x_2,F_1)+w(x_2,F_4).
    \end{align*}
    To show that $f_1\in I^{(2)}$, we prove that $f_1\in Q(P)^2$ for every $P\in\MinAss(I)$, where $Q(P)$ denotes the primary component corresponding to $P$. Since $x_3^{2M_3}x_4^{2M_4}\mid f_1$, by Remark~\ref{rmk:symbolic} it is enough to show $f \in Q(P)^2$ for those minimal primes $P$ such that $x_3,x_4\notin P$. We proceed with the following cases. If $x_2^{w(x_2,F_3)} \in Q(P)$, then $x_2^{2w(x_2,F_3)} \in Q(P)^2$. As $x_2^{2w(x_2,F_3)} \mid f_1$, we have $f_1 \in Q(P)^2$. If $x_2^{w(x_2,F_3)} \notin Q(P)$, then since the irreducible decomposition of $I$ is minimal and $x_4\notin P$, we must have $x_1^{w(x_1,F_3)} \in Q(P)$. Furthermore, as $x_3, x_4 \notin P$ and $F_1 = \{x_2,x_3,x_4\}$, we already know that $x_2^{w(x_2,F_1)} \in Q(P)$. Hence, we have $x_1^{w(x_1,F_3)} x_2^{w(x_2,F_1)} \in Q(P)^2$ as $x_1^{w(x_1,F_3)} x_2^{w(x_2,F_1)} \mid f_1$. This gives $f_1 \in Q(P)^2$. Therefore, $f_1 \in I^{(2)} \setminus I^2$. 

    \smallskip
    \noindent
    \textsc{Case 2.} Assume that $ w(x_1,F_4) < w(x_1,F_3)$ and $ w(x_2,F_3) < w(x_2,F_4)$. We further consider two subcases.

    \noindent
    \textsc{Subcase 2A.} Assume that $2w(x_2,F_3) <  w(x_2,F_1)$. We consider the monomial \[
    f_{2A}= x_1^{\max\{2w(x_1,F_4),w(x_1,F_3)\}}x_2^{\max\{2w(x_2,F_3),w(x_2,F_4)\}}x_3^{2M_3}x_4^{2M_4}.\]
    Firstly, we show that $f_{2A} \notin I^2$. Note that since $\max\{2w(x_2,F_3),\,w(x_2,F_4)\} < w(x_2,F_1)$ we obtain $f_{2A} \notin \l \mathbf{x}^{2\mathbf{a}_1}, \mathbf{x}^{\mathbf{a}_1+\mathbf{a}_2}, \mathbf{x}^{\mathbf{a}_1+\mathbf{a}_3}, \mathbf{x}^{\mathbf{a}_1+\mathbf{a}_4}\r$. For the remaining generators of $J^2$, we have the following exponent comparisons:
    \begin{align*}
        \max\{2w(x_1,F_4),w(x_1,F_3)\} &<2w(x_1,F_2),\\
        2w(x_1,F_4)&<2w(x_1,F_3),\\
        \max\{2w(x_2,F_3),w(x_2,F_4)\}&<2w(x_2,F_4),\\
        2w(x_1,F_4)&<w(x_1,F_2)+w(x_1,F_3),\\
        \max\{2w(x_1,F_4),w(x_1,F_3)\}&<w(x_1,F_2)+w(x_1,F_4),\\
        2w(x_2,F_3)&<w(x_2,F_3)+w(x_2,F_4).
    \end{align*}
    Thus $f_{2A} \notin \l \mathbf{x}^{2\mathbf{a}_2}, \mathbf{x}^{2\mathbf{a}_3}, \mathbf{x}^{2\mathbf{a}_4},\mathbf{x}^{\mathbf{a}_2+\mathbf{a}_3},\mathbf{x}^{\mathbf{a}_2+\mathbf{a}_4}, \mathbf{x}^{\mathbf{a}_3 + \mathbf{a}_4} \r$. To prove that $f_{2A}\in I^{(2)}$, by Remark~\ref{rmk:symbolic} it suffices to verify that $f_{2A}\in Q(P)^2$ for those $P\in\MinAss(I)$ such that $x_3,x_4\notin P$. Now, if $x_2^{w(x_2,F_3)} \in Q(P)$, then $x_2^{2w(x_2,F_3)} \in Q(P)^2$, and hence $f_{2A} \in Q(P)^2$. Likewise, if $x_1^{w(x_1,F_4)}\in Q(P)$, then $x_1^{2w(x_1,F_4)}\in Q(P)^2$, which again implies that $f_{2A}\in Q(P)^2$. Finally, suppose that $x_2^{w(x_2,F_3)}, x_1^{w(x_1,F_4)}  \notin Q(P)$. Then the minimality of the irreducible decomposition of $I$ forces  $x_1^{w(x_1,F_3)}, x_2^{w(x_2,F_4)} \in Q(P)$. Therefore, $x_1^{w(x_1,F_3)}x_2^{w(x_2,F_4)} \in Q(P)^2$, and hence $f_{2A} \in Q(P)^2$. 

    \noindent
    \textsc{Subcase 2B.} Assume that $w(x_2,F_1) \le 2w(x_2,F_3)$. In this case, we consider the monomial,
    \[
    f_{2B}= x_1^{w(x_1, F_4)}x_2^{2w(x_2,F_4)}x_3^{2M_3}x_4^{2M_4}.
    \]
    From Remark~\ref{rmk:minexp}, to show $f_{2B} \notin I^2$ it suffices to show that $f_{2B} \notin  \l \mathbf{x}^{2\mathbf{a}_1}, \mathbf{x}^{\mathbf{a}_1+\mathbf{a}_2}, \mathbf{x}^{\mathbf{a}_1+\mathbf{a}_3}, \mathbf{x}^{\mathbf{a}_1+\mathbf{a}_4}\r$. This follows from the following exponent comparisons:
    \begin{align*}
        2w(x_2,F_4)&<2w(x_2,F_1),\\
        w(x_1,F_4)&<\min\{w(x_1,F_2),w(x_1,F_3)\},\\
        2w(x_2,F_4)&<w(x_2,F_1)+w(x_2,F_4).
    \end{align*}
    Next, we show that $f_{2B}\in Q(P)^2$ for every $P\in\MinAss(I)$. By Remark~\ref{rmk:symbolic} it is enough to show $f \in Q(P)^2$ for those $P\in\MinAss(I)$ such that $x_3,x_4\notin P$. Firstly, if $x_2^{w(x_2,F_4)} \in Q(P)$, then $x_2^{2w(x_2,F_4)} \in Q(P)^2$. Hence $f_{2B} \in Q(P)^2$. If $x_2^{w(x_2,F_4)} \notin Q(P)$, then by minimality of the irreducible primary decomposition of $I$, we have $x_1^{w(x_1,F_4)} \in Q(P)$. Moreover, since $x_3, x_4 \notin P$ and $F_1 = \{x_2,x_3,x_4\}$, we have $x_2^{w(x_2,F_1)} \in Q(P)$. Thus, $x_1^{w(x_1,F_4)}x_2^{w(x_2,F_1)} \in Q(P)^2$. Since $w(x_2,F_1) \le 2w(x_2,F_3) < 2w(x_2,F_4)$, it follows that $x_1^{w(x_1,F_4)}x_2^{w(x_2,F_1)} \mid f_{2B}$. Therefore,  $f_{2B} \in Q(P)^2$ and hence $f_{2B} \in I^{(2)}$.  

    \smallskip
    \noindent
    \textsc{Case 3.} Assume that $w(x_1,F_3)\leq w(x_1,F_4)$ and $w(x_2,F_4)<w(x_2,F_3)$. If $w(x_1,F_3)=w(x_1,F_4)$, then, by interchanging the roles of $F_3$ and $F_4$ in the Case~$1$ and choosing the corresponding monomial, we obtain $I^{(2)}\neq I^2$. If $w(x_1,F_3)<w(x_1,F_4)$, the same conclusion follows by interchanging $F_3$ and $F_4$ in Case~$2$ and applying an analogous argument.

    \smallskip
    \noindent
    \textsc{Case 4.} Assume that $w(x_1,F_4)<w(x_1,F_3)$ and $w(x_2,F_4)\leq w(x_2,F_3)$. Both possibilities, namely $w(x_2,F_4)=w(x_2,F_3)$ and $w(x_2,F_4)<w(x_2,F_3)$, follow from Case~$1$ by interchanging $F_3$ and $F_4$. Hence, $I^{(2)}\neq I^2$.
\end{proof}

We are now in a position to prove the main result of Set-up~\ref{Set-up:Sup_3}. We continue to use the notation introduced there.
\begin{prop}\label{Lem:Support3Int_2}
    Let $I$ be a support-3 monomial ideal such that
    \begin{enumerate}
        \item the irreducible decomposition of $I$ is minimal;
        \item there exist $F_1,F_2\in \F(I)$ such that $|F_1\cap F_2|=2$;
        \item there is $x_i\in F_1\cap F_2$ such that $w(x_i,F_1)\neq w(x_i,F_1)$.
    \end{enumerate}
    Then $I^{(2)}\neq I^2$.
\end{prop}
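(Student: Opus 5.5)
Note first that condition (3) contains a typo; it should read $w(x_i,F_1)\neq w(x_i,F_2)$. If some $\alpha_I(F)\ge 2$, then Lemma~\ref{lem:Support_gen} already gives $I^{(2)}\neq I^2$. So we may assume $\alpha_I(F)=1$ for all $F$, and we are in Set-up~\ref{Set-up:Sup_3}. After relabeling we have $F_1=\{x_2,x_3,x_4\}$, $F_2=\{x_1,x_3,x_4\}$ and $w(x_3,F_1)<w(x_3,F_2)$. We then reduce to $J=I\cap\K[x_1,\ldots,x_4]$ via Remark~\ref{rmk:minexp}. If neither $\mathbf{x}^{\mathbf{a}_3}$ nor $\mathbf{x}^{\mathbf{a}_4}$ lies in $J$, then $\D(I)[F_1\cup F_2]=\langle F_1,F_2\rangle$ and Proposition~\ref{Prop_Sim_tree} finishes the proof. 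Otherwise we work with the full $J=(\mathbf{x}^{\mathbf{a}_1},\ldots,\mathbf{x}^{\mathbf{a}_4})$, as in the set-up. The degenerate cases where one of these generators is missing go through by deleting the corresponding inequality checks; the constructed monomials only become easier to keep out of $J^2$, and the symbolic-power verifications do not use the missing generator.

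The main step is to show that the three preceding lemmas cover all possible relative positions of $w(x_1,F_2)$ among $\{w(x_1,F_3),w(x_1,F_4)\}$ and of $w(x_2,F_1)$ among $\{w(x_2,F_3),w(x_2,F_4)\}$. For $x_1$ there are four positions: $w(x_1,F_2)$ is at most the minimum of the other two; it is strictly above the minimum but at most the maximum (with $F_3$ or with $F_4$ giving the minimum); or it is strictly above both. The same four positions occur for $x_2$.
\begin{itemize}
\item If either variable is in the ``at most the minimum'' position, Lemma~\ref{Prop:Supp_3:Min} applies.
\item If both are strictly above both others, Lemma~\ref{Prop:Supp_3:Max} applies.
\end{itemize}
The remaining mixed configurations are: $x_1$ intermediate or maximal, together with $x_2$ intermediate or maximal, and not both maximal. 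Lemma~\ref{Prop:Supp_3:Intermediate} handles exactly the cases where $x_2$ is intermediate with $w(x_2,F_4)<w(x_2,F_1)\le w(x_2,F_3)$, whatever the position of $x_1$.

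The other configurations are obtained by symmetry. Interchanging the labels of $F_3$ and $F_4$ is the same as swapping $x_3\leftrightarrow x_4$. This swap preserves $F_1,F_2$ but changes which inequality plays the role of $w(x_3,F_1)<w(x_3,F_2)$, so it needs care. The swap $x_1\leftrightarrow x_2$ exchanges $F_1\leftrightarrow F_2$ and $F_3\leftrightarrow F_4$.

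The main obstacle is that these symmetries do not obviously preserve the standing normalization $w(x_3,F_1)<w(x_3,F_2)$. I would handle this as follows. The constructed monomials in Lemmas~\ref{Prop:Supp_3:Intermediate} and~\ref{Prop:Supp_3:Max} put $x_3^{2M_3}x_4^{2M_4}$ in $f$, so the $x_3$-normalization is irrelevant there. Only Lemma~\ref{Prop:Supp_3:Min} and part (i) of Lemma~\ref{Prop:Supp_3:Intermediate} use it, and those are applied in their stated orientation. For the cases where $x_2$ is intermediate in the other order, or maximal while $x_1$ is intermediate, I would apply Lemma~\ref{Prop:Supp_3:Intermediate} after the relabeling $F_3\leftrightarrow F_4$ (that is, $x_3\leftrightarrow x_4$). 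In (ii) and (iii) this is harmless because of the $x_3^{2M_3}x_4^{2M_4}$ factor. In (i) I would check that the analogous monomial, with $x_3,x_4$ roles adjusted, still lies in $I^{(2)}\setminus I^2$, and where needed redo the short exponent comparisons. Finally, if the unequal vertex is $x_4$ rather than $x_3$, the symmetric relabeling $x_3\leftrightarrow x_4$ reduces to the set-up, and this completes the case analysis.
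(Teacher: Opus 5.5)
Your route is the paper's: reduce to $\alpha_I(F)=1$, dispose of $J=(\mathbf{x}^{\mathbf{a}_1},\mathbf{x}^{\mathbf{a}_2})$ by Proposition~\ref{Prop_Sim_tree}, and split on the position of $w(x_1,F_2)$ among $w(x_1,F_3),w(x_1,F_4)$ and of $w(x_2,F_1)$ among $w(x_2,F_3),w(x_2,F_4)$. However, your final assignment leaves a hole.

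\textbf{The uncovered case.} You send the configurations with $w(x_2,F_1)>\max\{w(x_2,F_3),w(x_2,F_4)\}$ and $x_1$ intermediate to Lemma~\ref{Prop:Supp_3:Intermediate} after relabeling $F_3\leftrightarrow F_4$. This cannot work. Every part of that lemma assumes $w(x_2,F_1)\le w(x_2,F_3)$, and the relabeling only turns this into $w(x_2,F_1)\le w(x_2,F_4)$. So $x_2$ is never strictly maximal there.

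\textbf{The fix.} What is needed is the mirror image of part~(ii) under $x_1\leftrightarrow x_2$; this is the paper's Case~3(ii), ``analogous to Lemma~\ref{Prop:Supp_3:Intermediate}(ii)''. Contrary to what you wrote, this transposition swaps $F_1,F_2$ but fixes $F_3=\{x_1,x_2,x_4\}$ and $F_4=\{x_1,x_2,x_3\}$. It therefore carries ``$x_1$ maximal, $x_2$ intermediate'' to ``$x_2$ maximal, $x_1$ intermediate'' with the same $F_3/F_4$ orientation. It destroys the normalization $w(x_3,F_1)<w(x_3,F_2)$, but part~(ii) never uses it, thanks to the factor $x_3^{2M_3}x_4^{2M_4}$.

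Explicitly, suppose $w(x_1,F_4)<w(x_1,F_2)\le w(x_1,F_3)$ and $w(x_2,F_1)>\max\{w(x_2,F_3),w(x_2,F_4)\}$. Take $f=x_1^{\max\{2w(x_1,F_4),w(x_1,F_2)\}}x_2^{w(x_2,F_4)}x_3^{2M_3}x_4^{2M_4}$.
\begin{itemize}
\item Not in $I^2$: the $x_2$-exponent rules out every product of two generators except possibly $\mathbf{x}^{2\mathbf{a}_2},\mathbf{x}^{\mathbf{a}_2+\mathbf{a}_3},\mathbf{x}^{\mathbf{a}_2+\mathbf{a}_4},\mathbf{x}^{2\mathbf{a}_3}$. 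The $x_1$-exponent rules these four out.
\item In $I^{(2)}$: for $P$ with $x_3,x_4\notin P$, either $x_1^{w(x_1,F_4)}\in Q(P)$, or else $\mathbf{x}^{\mathbf{a}_4},\mathbf{x}^{\mathbf{a}_2}\in Q(P)$ force $x_2^{w(x_2,F_4)}x_1^{w(x_1,F_2)}\in Q(P)^2$.
\end{itemize}
The other orientation follows by relabeling the generators $\mathbf{x}^{\mathbf{a}_3}\leftrightarrow\mathbf{x}^{\mathbf{a}_4}$.

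\textbf{Your worry about part (i).} It goes away once ``interchanging $F_3$ and $F_4$'' is read as relabeling the generators $\mathbf{x}^{\mathbf{a}_3},\mathbf{x}^{\mathbf{a}_4}$ inside the argument, not as the variable transposition $x_3\leftrightarrow x_4$. The monomial $f_1$ of part~(i) works verbatim in the configuration $w(x_1,F_4)<w(x_1,F_2)\le w(x_1,F_3)$, $w(x_2,F_3)<w(x_2,F_1)\le w(x_2,F_4)$:
\begin{itemize}
\item The hypotheses on $F_3,F_4$ enter only through $x_1$- and $x_2$-exponents, to exclude $\mathbf{x}^{2\mathbf{a}_3},\mathbf{x}^{\mathbf{a}_3+\mathbf{a}_4},\mathbf{x}^{2\mathbf{a}_4}$. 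These exclusions now come from $2w(x_1,F_3)$, $w(x_2,F_3)+w(x_2,F_4)$ and $2w(x_2,F_4)$ respectively.
\item The symbolic check uses only $\mathbf{x}^{\mathbf{a}_1},\mathbf{x}^{\mathbf{a}_2}$ and $x_4\notin P$.
\end{itemize}
Actually transposing $x_3,x_4$ in $f_1$ would instead require $w(x_4,F_1)\neq w(x_4,F_2)$, which you do not have. With these two repairs your case analysis coincides with the paper's.
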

\begin{proof}
    By Lemma~\ref{lem:Support_gen}, it suffices to consider the case where $\alpha_I(F)= 1$ for every $F \in \F(I)$. We adopt the notation of Set-up~\ref{Set-up:Sup_3}. Let $\mathbf{x}^{\mathbf{a}_1},\mathbf{x}^{\mathbf{a}_2}\in \mathcal{G}(I)$ with $\supp(\mathbf{x}^{\mathbf{a}_1})=F_1= \{x_2,x_3,x_4\}, \supp(\mathbf{x}^{\mathbf{a}_2})=F_2= \{x_1,x_3,x_4\}$, and set $J=I\cap \K[x_1,x_2,x_3, x_4]$. There are four possible choices of the ideal $J$, namely
    \begin{enumerate}
        \item $J =  \left(\mathbf{x}^{\mathbf{a}_1},\mathbf{x}^{\mathbf{a}_2} \right )$,
        \item $J= \left(\mathbf{x}^{\mathbf{a}_1},\mathbf{x}^{\mathbf{a}_2}, \mathbf{x}^{\mathbf{a}_3} \right )$,

        \item $J= \left(\mathbf{x}^{\mathbf{a}_1},\mathbf{x}^{\mathbf{a}_2}, \mathbf{x}^{\mathbf{a}_4} \right )$,

        \item $J= \left(\mathbf{x}^{\mathbf{a}_1},\mathbf{x}^{\mathbf{a}_2}, \mathbf{x}^{\mathbf{a}_3}, \mathbf{x}^{\mathbf{a}_4} \right )$.
    \end{enumerate}
    Case $(1)$ has already been treated in Set-up~\ref{Set-up:Sup_3}. In case $(4)$, the support of $J$ is the complete $3$-uniform hypergraph on four vertices. We now establish this case. The proofs of cases $(2)$ and $(3)$ are entirely analogous and are obtained by omitting the weights corresponding to the minimal generators that do not belong to the minimal generating set of $J$.
    
    Since $x_1\in F_i$ for each $i\in\{2,3,4\}$, we distinguish cases according to the order relations among $w(x_1,F_2)$, $w(x_1,F_3)$, and $w(x_1,F_4)$ considered in the preceding three lemmas.

    \noindent
    \textsc{Case 1.} Suppose that $w(x_1,F_2)\leq \min\{w(x_1,F_3),w(x_1,F_4)\}$. Then Lemma~\ref{Prop:Supp_3:Min} implies that ~$I^{(2)}\neq I^2$. 

    \smallskip
    \noindent
    \textsc{Case 2.} Suppose that $w(x_1,F_2)>\max\{w(x_1,F_3),w(x_1,F_4)\}$.
    We further distinguish cases according to the position of $w(x_2,F_1)$ with respect to $w(x_2,F_3)$ and $w(x_2,F_4)$. 
    
    \smallskip
    \noindent\textit{(i)} If $w(x_2,F_1)\le \min\{w(x_2,F_3),w(x_2,F_4)\}$, then the conclusion follows from Lemma~\ref{Prop:Supp_3:Min}.
    
    \smallskip
    \noindent\textit{(ii)} If $w(x_2,F_1)>\max\{w(x_2,F_3),w(x_2,F_4)\}$, then the conclusion follows from Lemma~\ref{Prop:Supp_3:Max}.
    
    \smallskip
    \noindent\textit{(iii)} If $w(x_2,F_4)<w(x_2,F_1)\le w(x_2,F_3)$, then Lemma~\ref{Prop:Supp_3:Intermediate}(ii) yields $I^{(2)}\neq I^2$.
    
    \smallskip
    \noindent\textit{(iv)} If $w(x_2,F_3)<w(x_2,F_1)\le w(x_2,F_4)$, then the conclusion follows from Lemma~\ref{Prop:Supp_3:Intermediate}(ii) after interchanging the roles of $F_3$ and $F_4$.

    \smallskip
    \noindent
    \textsc{Case 3.} Suppose that $\min\{w(x_1,F_3),w(x_1,F_4)\}< w(x_1,F_2) \le \max\{w(x_1,F_3),w(x_1,F_4)\}$. First, assume that $w(x_1,F_4)\leq w(x_1,F_3)$. Then $w(x_1,F_4)<w(x_1,F_2)\leq w(x_1,F_3)$. We now consider the same four possibilities as in Case~2, according to the relative position of $w(x_2,F_1)$ with respect to $w(x_2,F_3)$ and $w(x_2,F_4)$.

    \smallskip
    \noindent\textit{(i)} If $w(x_2,F_1)\le \min\{w(x_2,F_3),w(x_2,F_4)\},$
        then the conclusion follows from Lemma~\ref{Prop:Supp_3:Min}.
        
    \smallskip
    \noindent\textit{(ii)} If $w(x_2,F_1)> \max\{w(x_2,F_3),w(x_2,F_4)\}$, then the conclusion follows by an argument analogous to that of Lemma~\ref{Prop:Supp_3:Intermediate}(ii).
        
    \smallskip
    \noindent\textit{(iii)} If $w(x_2,F_4)<w(x_2,F_1)\le w(x_2,F_3)$, then Lemma~\ref{Prop:Supp_3:Intermediate}(iii) yields $I^{(2)}\neq I^2$.
        
    \smallskip
    \noindent\textit{(iv)} Lastly, if $w(x_2,F_3)<w(x_2,F_1)\le w(x_2,F_4)$, then the conclusion follows from Lemma~\ref{Prop:Supp_3:Intermediate}(i) after interchanging the roles of $F_3$ and $F_4$.

    Finally, assume that $w(x_1,F_3)\leq w(x_1,F_4)$. Then $w(x_1,F_3)<w(x_1,F_2)\leq w(x_1,F_4)$. Proceeding exactly as above, we obtain $I^{(2)}\neq I^2$, completing the proof.
\end{proof}

\begin{setup}\label{Set-up:Sup_3(1)}
    Let $I\subseteq S$ be a support-$3$ monomial ideal with minimal irreducible primary decomposition, and suppose that $\alpha_I(F)=1$ for every $F\in\mathcal{F}(I)$. Let $F_1,F_2\in\mathcal{F}(I)$ satisfy $|F_1\cap F_2|=1$. After relabeling the variables, assume that
    \[
        F_1=\{x_1,x_2,x_3\}, \qquad F_2=\{x_1,x_4,x_5\},
    \]
    and let $\mathbf{x}^{\mathbf a_1},\mathbf{x}^{\mathbf a_2}\in\mathcal G(I)$ satisfy $\supp(\mathbf{x}^{\mathbf a_1})=F_1,\supp(\mathbf{x}^{\mathbf a_2})=F_2$, with $w(x_1,F_1)<w(x_1,F_2)$. Set $J=I\cap \K[x_1,\ldots,x_5]$. Then $(\mathbf{x}^{\mathbf a_1},\mathbf{x}^{\mathbf a_2}) \subseteq J\subseteq\widetilde J$, where $\widetilde J=(\mathbf{x}^{\mathbf a_1},\ldots,\mathbf{x}^{\mathbf a_{10}})$ is the support-$3$ monomial ideal whose support is the complete $3$-uniform hypergraph on $\{x_1,\ldots,x_5\}$. We denote the supports of $\mathbf{x}^{\mathbf a_3},\ldots,\mathbf{x}^{\mathbf a_{10}}$ by
    \[
    \begin{aligned}
        F_3&=\{x_2,x_3,x_4\}, &
        F_4&=\{x_2,x_3,x_5\},\\
        F_5&=\{x_2,x_4,x_5\}, &
        F_6&=\{x_3,x_4,x_5\},\\
        F_7&=\{x_1,x_2,x_4\}, &
        F_8&=\{x_1,x_2,x_5\},\\
        F_9&=\{x_1,x_3,x_4\}, &
        F_{10}&=\{x_1,x_3,x_5\}.
    \end{aligned}
    \]
\end{setup}

\begin{prop}\label{lem:supp3_int1}
    Let $I$ be a support-3 monomial ideal such that
    \begin{enumerate}
        \item the irreducible decomposition of $I$ is minimal;
        \item there exist $F_1,F_2\in \F(I)$ such that $|F_1\cap F_2|=1$, and $w(x_i,F_1)\neq w(x_i,F_1)$ where $F_1\cap F_2=\{x_i\}$. 
    \end{enumerate}
    Then $I^{(2)}\neq I^2$.
\end{prop}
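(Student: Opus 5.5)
By Lemma~\ref{lem:Support_gen} we may assume $\alpha_I(F)=1$ for every $F\in\F(I)$, and we adopt Set-up~\ref{Set-up:Sup_3(1)}: $F_1=\{x_1,x_2,x_3\}$, $F_2=\{x_1,x_4,x_5\}$, and $w(x_1,F_1)<w(x_1,F_2)$. If two facets of $\D(I)$ meet in two vertices and have a non-uniform weight there, Proposition~\ref{Lem:Support3Int_2} already gives $I^{(2)}\neq I^2$. So we may assume that any two supports meeting in exactly two vertices carry equal weights on both common vertices. If $J=(\x^{\mathbf a_1},\x^{\mathbf a_2})$, that is $\D(I)[F_1\cup F_2]=\langle F_1,F_2\rangle$, Proposition~\ref{Prop_Sim_tree} finishes the proof. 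Hence the work is to handle the other facets $F_3,\dots,F_{10}$ that may occur inside $\{x_1,\dots,x_5\}$.

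The main case split is whether some $F_j$ with $j\in\{7,8,9,10\}$ (a facet containing $x_1$ and meeting both $F_1$ and $F_2$ in two vertices) belongs to $\F(J)$. Suppose, say, $F_7=\{x_1,x_2,x_4\}\in\F(J)$. Then $|F_7\cap F_1|=2$ with $x_1$ common, so the standing reduction forces $w(x_1,F_7)=w(x_1,F_1)$. Likewise $|F_7\cap F_2|=2$ forces $w(x_1,F_7)=w(x_1,F_2)$. Together these give $w(x_1,F_1)=w(x_1,F_2)$, a contradiction. So we may assume none of $F_7,\dots,F_{10}$ lies in $\F(J)$. Thus $x_1$ lies only in $F_1$ and $F_2$ among the supports inside $\{x_1,\dots,x_5\}$, and the remaining generators have supports among $F_3,\dots,F_6$, which avoid $x_1$.

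In this situation I will adapt the monomial from Proposition~\ref{Prop_Sim_tree}:
\[
f=x_1^{\max\{2w(x_1,F_1),w(x_1,F_2)\}}\,x_2^{w(x_2,F_1)}x_3^{w(x_3,F_1)}x_4^{w(x_4,F_2)}x_5^{w(x_5,F_2)}.
\]
Membership $f\in Q(P)^2$ for every $P\in\MinAss(I)$ goes exactly as in Proposition~\ref{Prop_Sim_tree}. Minimality of the irreducible decomposition gives $x_i^{w(x_i,F_1)}\in Q(P)$ for some $x_i\in F_1$ and $x_j^{w(x_j,F_2)}\in Q(P)$ for some $x_j\in F_2$. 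If $i=j=1$ we use $x_1^{2w(x_1,F_1)}\mid f$; otherwise the product of the two powers divides $f$.

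For $f\notin J^2$ (which suffices, by the analogue of Remark~\ref{rmk:minexp}), products involving only $\x^{\mathbf a_1},\x^{\mathbf a_2}$ are excluded as before. Any product involving some $\x^{\mathbf a_k}$ with $3\le k\le 6$ must be checked carefully, because $F_k$ meets $F_1$ or $F_2$ in two vertices, where the weights now agree. I expect this to be the main obstacle: for example, $\x^{\mathbf a_3}\x^{\mathbf a_2}$ could divide $f$ when $x_4$'s weights are small. The fix I would try first is an exponent count. The exponents of $x_2,\dots,x_5$ in $f$ total at most one copy of each $F_1$/$F_2$ weight, while any product of two generators must use, on $\{x_2,\dots,x_5\}$, at least two vertex-weights beyond those. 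If this fails, I would lower the exponent of $x_4$ or $x_5$ to $w(x_4,F_2)$ and use that the weights on $F_3\cap F_2$ agree with those on $F_2$, which creates an overcount on a shared vertex. Failing that, I would fall back on case analysis on which of $F_3,\dots,F_6$ are present, in the style of Lemmas~\ref{Prop:Supp_3:Min}--\ref{Prop:Supp_3:Max}, inflating the exponents of variables outside the offending pair to $2M_i$ and using Remark~\ref{rmk:symbolic}.
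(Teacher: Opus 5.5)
Your setup, the elimination of $F_7,\dots,F_{10}$, the choice of $f$, and the proof that $f\in I^{(2)}$ all agree with the paper and are correct. The gap is the step $f\notin J^2$, which is the real content of the proposition. You never prove it, you name a non-issue as the main obstacle, and each fallback you list either fails or is circular.

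\begin{itemize}
\item The feared product cannot occur. $\mathbf{x}^{\mathbf{a}_2+\mathbf{a}_3}$ never divides $f$, whatever the weights: its $x_4$-exponent is $w(x_4,F_2)+w(x_4,F_3)>w(x_4,F_2)$, and $w(x_4,F_2)$ is the $x_4$-exponent of $f$.
\item A total exponent count on $\{x_2,\dots,x_5\}$ does not work. For $\mathbf{x}^{\mathbf{a}_3+\mathbf{a}_4}$ that total can be smaller than the total for $f$ when $w(x_4,F_2)$ and $w(x_5,F_2)$ are large.
\item Your second fix is circular. It uses equal weights on $F_3\cap F_2=\{x_4\}$, a one-vertex intersection. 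Proposition~\ref{Lem:Support3Int_2} does not give this; it is an instance of the statement you are proving.
\item Your third fix, the case analysis with inflated exponents, is never carried out.
\end{itemize}

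The missing idea is a coordinatewise saturation argument. $\mathbf{x}^{\mathbf{a}_1}$ already has the full $f$-exponent in $x_2$ and $x_3$, and $\mathbf{x}^{\mathbf{a}_2}$ has it in $x_4$ and $x_5$. Your standing reduction gives the same for the other generators:
\begin{itemize}
\item $\mathbf{x}^{\mathbf{a}_3}$ and $\mathbf{x}^{\mathbf{a}_4}$ saturate $\{x_2,x_3\}$, since $F_3\cap F_1=F_4\cap F_1=\{x_2,x_3\}$;
\item $\mathbf{x}^{\mathbf{a}_5}$ and $\mathbf{x}^{\mathbf{a}_6}$ saturate $\{x_4,x_5\}$, since $F_5\cap F_2=F_6\cap F_2=\{x_4,x_5\}$.
\end{itemize}
Now suppose $\mathbf{x}^{\mathbf{a}_i+\mathbf{a}_j}\mid f$ with $i\in\{1,3,4\}$. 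Then $F_j\cap\{x_2,x_3\}=\emptyset$, which among $F_1,\dots,F_6$ forces $j=2$. Then $F_i\cap\{x_4,x_5\}=\emptyset$, which forces $i=1$. The case $i\in\{2,5,6\}$ is symmetric. The only surviving product is $\mathbf{x}^{\mathbf{a}_1+\mathbf{a}_2}$, and the $x_1$-exponent excludes it: $w(x_1,F_1)<w(x_1,F_2)$ gives $w(x_1,F_1)+w(x_1,F_2)>\max\{2w(x_1,F_1),w(x_1,F_2)\}$. This is exactly the paper's Cases 1--3, and with this step added your proof would be complete.
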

\begin{proof}
    By Lemma~\ref{lem:Support_gen}, it suffices to consider the case where $\alpha_I(F)=1$ for every $F \in \F(I)$. To prove this, we adopt the notations in Set-up~\ref{Set-up:Sup_3(1)}. Let $F_1=\{x_1,x_2,x_3\}, F_2=\{x_1,x_4,x_5\}$ and $w(x_1,F_1) < w(x_1,F_2)$. Let $\mathbf{x}^{\mathbf{a}_1},\mathbf{x}^{\mathbf{a}_2}\in \mathcal{G}(I)$ be such that $\supp(\mathbf{x}^{\mathbf{a}_1})=F_1, \supp(\mathbf{x}^{\mathbf{a}_2})=F_2$.  Consider the ideal $J=I\cap \K[x_1,\dots , x_5]$. 

    We first eliminate the generators whose supports contain $x_1$. Observe that, for each $7\le i\le 10$, we have $|F_1\cap F_i|=2 \text{ and } x_1\in F_1\cap F_i.$ Suppose that $\mathbf{x}^{\mathbf a_{\ell}}\in J$ for some $7\le \ell \le 10$. If $w(x_1,F_1)\neq w(x_1,F_{\ell})$, then Proposition~\ref{Lem:Support3Int_2} yields $I^{(2)}\neq I^2$. Hence, we may assume that $w(x_1,F_1)=w(x_1,F_{\ell})$. Since $|F_2\cap F_{\ell}|=2, x_1\in F_2\cap F_{\ell},$ and $w(x_1,F_{\ell})=w(x_1,F_1)<w(x_1,F_2)$, it follows that $w(x_1,F_{\ell})\neq w(x_1,F_2)$. Applying Proposition~\ref{Lem:Support3Int_2} once again, we obtain $I^{(2)}\neq I^2$. Consequently, we may assume that $\mathbf{x}^{\mathbf a_i}\notin J\text{ for all }7\le i\le 10$.
    
    We consider the monomial 
    \[
    f = x_1^{\max\left\{2w(x_1,F_1),w(x_1,F_2)\right\}}x_2^{w(x_2,F_1)}x_3^{w(x_3,F_1)}x_4^{w(x_4,F_2)}x_5^{w(x_5,F_2)}, 
    \]
    and claim that $f\in I^{(2)}\setminus I^2$. We first show that $f\notin I^2$. Since $f\in K[x_1,\ldots,x_5]$, it is enough to prove that $f\notin J^2$. We consider the following cases.

    \smallskip
    \noindent
    \textsc{Case 1}. We show that $\mathbf{x}^{\mathbf{a}_1+\mathbf{a}_i}\nmid f$ for all $1\leq i\leq 6$. If $i=2$, then ${w(x_1,F_1)+w(x_1,F_2)}> \max\left\{2w(x_1,F_1),w(x_1,F_2)\right\}$, and hence $\mathbf{x}^{\mathbf{a}_1+\mathbf{a}_2}\nmid f$. Now, let $i\neq 2$. Since $F_i\cap \{x_2,x_3\}\neq \emptyset$, either $x_2\in F_i$ or $x_3\in F_i$. If $x_2\in F_i$, then $w(x_2,F_1)+w(x_2, F_i)>w(x_2, F_1)$, and so $\mathbf{x}^{\mathbf{a}_1+\mathbf{a}_i}\nmid f$. The argument is identical when $x_3\in F_i$.

    \smallskip
    \noindent
    \textsc{Case 2}. We show that $\mathbf{x}^{\mathbf{a}_2+\mathbf{a}_i}\nmid f$ for all $1\leq i\leq 6$. The case $i=1$ was settled in Case 1. For $i\neq 1$, we have $F_i\cap \{x_4, x_5\}\neq \emptyset$, and the same argument as in Case~1 shows that $\mathbf{x}^{\mathbf a_2+\mathbf a_i}\nmid f$.

    \smallskip
    \noindent
    \textsc{Case 3}. We show that $\mathbf{x}^{\mathbf{a}_i+\mathbf{a}_j}\nmid f$ for all $3\leq i,j\leq 6$. Assume that $\mathbf{x}^{\mathbf{a}_3}\in J$. Observe that $F_1\cap F_3=\{x_2,x_3\}$. If $w(x_2,F_1)\neq w(x_2,F_3)$ or $w(x_3,F_1)\neq w(x_3,F_3)$, then Proposition~\ref{Lem:Support3Int_2} implies that $I^{(2)}\neq I^2$, as desired. Hence, we may assume that $w(x_2,F_1)=w(x_2,F_3) \text{ and } w(x_3,F_1)=w(x_3,F_3)$. Since the exponent of $x_2$ in $f$ is $w(x_2,F_1)$, it follows that $\mathbf{x}^{2\mathbf{a}_3},\, \mathbf{x}^{\mathbf{a}_3+\mathbf{a}_4},\, \mathbf{x}^{\mathbf{a}_3+\mathbf{a}_5} \nmid f.$ Similarly, since the exponent of $x_3$ in $f$ is $w(x_3,F_1)$, we have $\mathbf{x}^{\mathbf{a}_3+\mathbf{a}_6}\nmid f$. The proofs for the remaining cases, namely when $\mathbf{x}^{\mathbf{a}_i}\in J$ for $4\le i\le 6$, are analogous and are therefore ~omitted.

    Now, it remains to show that $f\in I^{(2)}$. Let $P\in\MinAss(I)$, and let $Q(P)$ denote the corresponding primary component. We consider the following cases.

\smallskip
    \noindent
    \textsc{Case 1}. Suppose that $x_1^{w(x_1,F_1)}\in Q(P)$. Then $x_1^{2w(x_1,F_1)}\in Q(P)^2$. Since $x_1^{2w(x_1,F_1)}\mid f$, we obtain $f\in Q(P)^2$.

\smallskip
    \noindent
    \textsc{Case 2}. Suppose that $x_1^{w(x_1,F_1)}\notin Q(P)$. By the minimality of the irreducible primary decomposition of $I$, either $x_2^{w(x_2,F_1)}\in Q(P)$ or $x_3^{w(x_3,F_1)}\in Q(P)$. 
    
    \smallskip
    \noindent
    \textsc{Subcase 2A}. If $x_2^{w(x_2,F_1)}\in Q(P)$, then, again by minimality of the irreducible primary decomposition of $I$, at least one of $x_1^{w(x_1,F_2)}, x_4^{w(x_4,F_2)}, x_5^{w(x_5,F_2)}$ belongs to $Q(P)$. Consequently, one of $x_2^{w(x_2,F_1)}x_1^{w(x_1,F_2)}, x_2^{w(x_2,F_1)}x_4^{w(x_4,F_2)}, x_2^{w(x_2,F_1)}x_5^{w(x_5,F_2)}$ belongs to $Q(P)^2$. Since each of these monomials divides $f$, we conclude that $f\in Q(P)^2$.
    
    \smallskip
    \noindent
    \textsc{Subcase 2B}. If $x_3^{w(x_3,F_1)}\in Q(P)$, then proceeding similarly as in Subcase~2A, one can show that $f\in Q(P)^2$.

    \smallskip
    \noindent
    Hence, in every case, $f\in Q(P)^2$. Since $P$ was arbitrary, $f\in I^{(2)}$, and this completes the ~proof.
\end{proof}

We conclude this section with the following theorem, which establishes Conjecture~\ref{conj} for support-3 monomial ideals.
\begin{thm}\label{support-3}
    Let $I$ be a support-$3$ monomial ideal. Then, Conjecture \ref{conj} is true.
\end{thm}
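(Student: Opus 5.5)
We prove Theorem~\ref{support-3} by showing that a support-$3$ monomial ideal $I$ satisfying the hypotheses of Conjecture~\ref{conj} (no embedded primes, minimal irreducible decomposition, Simis) has a standard linear weighting. We then take $J=\sqrt{I}$ and check that $J$ is Simis.

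\textbf{Step 1: reductions.} Since every minimal generator has support of size exactly $3$, no support properly contains another. Hence $\mathcal{F}(I)=\mathcal{F}(\sqrt{I})$ holds automatically. Since $I$ is Simis, Lemma~\ref{lem:Support_gen} gives $\alpha_I(F)=1$ for every $F\in\F(I)$. So the weights $w(x,F)$ are well defined.

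\textbf{Step 2: uniform weights.} Let $x$ be a vertex and let $F,G\in\F(I)$ both contain $x$. We must show $w(x,F)=w(x,G)$. As $|F|=|G|=3$ and $F\neq G$, we have $|F\cap G|\in\{1,2\}$.
\begin{itemize}
\item If $|F\cap G|=2$ and the weights differ, Proposition~\ref{Lem:Support3Int_2} gives $I^{(2)}\neq I^2$.
\item If $F\cap G=\{x\}$ and the weights differ, Proposition~\ref{lem:supp3_int1} gives the same contradiction.
\end{itemize}
Here we read the typo'd hypothesis $w(x_i,F_1)\neq w(x_i,F_1)$ in both propositions as $w(x_i,F_1)\neq w(x_i,F_2)$. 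Thus every variable has uniform weight. Let $w_i$ denote the common weight of $x_i$, and set $w_i=1$ if $x_i$ lies in no support.

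\textbf{Step 3: $I=J_w$ with $J$ Simis.} Set $J=\sqrt{I}$, the square-free ideal generated by $\prod_{x_i\in F}x_i$ for $F\in\F(I)$. Then $I$ is obtained from $J$ by substituting $x_i\mapsto x_i^{w_i}$, so $I=J_w$. The map $\psi\colon x_i\mapsto x_i^{w_i}$ is a flat ring endomorphism of $S$, and $S$ is a free module over its image. Flatness lets $\psi$ commute with finite intersections and products of monomial ideals, which gives:
\begin{itemize}
\item $\psi(J)S=I$;
\item $\psi(P)S=Q(P)$ for each minimal prime $P$, where $Q(P)=(x_i^{w_i}:x_i\in P)$;
\item $\psi(J^{(s)})S=I^{(s)}$ and $\psi(J^s)S=I^s$.
\end{itemize}
Faithful flatness makes extension along $\psi$ injective on ideals, so $I^{(s)}=I^s$ forces $J^{(s)}=J^s$ for all $s$. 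Hence $J$ is a Simis square-free monomial ideal with $I=J_w$; this is presumably the known fact \cite{mendez2024symbolic} that $J_w$ is Simis iff $J$ is.

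\textbf{Main obstacle.} The substantive work is entirely in Step 2, and it is already done by Propositions~\ref{Lem:Support3Int_2} and~\ref{lem:supp3_int1}. What remains to verify is that their case analyses really cover all configurations of the other generators on the $4$ or $5$ relevant vertices, including the ``analogous'' omitted cases. In the proof I would simply cite those propositions and keep the argument short.
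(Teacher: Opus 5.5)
Your proposal is correct and follows the paper's proof: you reduce to $\alpha_I(F)=1$ via Lemma~\ref{lem:Support_gen}, obtain uniform weights from Propositions~\ref{Lem:Support3Int_2} and~\ref{lem:supp3_int1}, and take $J=\sqrt{I}$ with $I=J_w$. The only difference is that you prove $J$ is Simis directly, using the faithfully flat substitution $x_i\mapsto x_i^{w_i}$, which correctly carries $J^{(s)}$ and $J^s$ to $I^{(s)}$ and $I^s$; the paper instead cites \cite[Corollary~5.5(b)]{mendez2024symbolic}.
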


\begin{proof}
    By Lemma~\ref{lem:Support_gen}, we have $\alpha_I(F)=1$ for every $F\in\mathcal{F}(I)$. Hence, Lemmas~\ref{Lem:Support3Int_2} and \ref{lem:supp3_int1} imply that every variable has standard linear weights. Therefore, there exist a square-free monomial ideal $J$ and a standard linear weighting $w$ such that $I=J_w$. Since $I$ is Simis, \cite[Corollary~5.5(b)]{mendez2024symbolic} implies that $J$ is Simis.
\end{proof}

Unlike the case of edge ideals of simple graphs, the Simis property for cubic square-free monomial ideals remains open in full generality and appears to be considerably more difficult to characterize. Nevertheless, for the class of $3$-path ideals of graphs, both the packing property and a complete combinatorial characterization of the Simis property have been established \cite{alilooee2021packing}. Consequently, these results, together with Theorem~\ref{support-3}, yield a complete combinatorial characterization of monomial ideals $I$ with minimal irreducible decomposition such that $\sqrt{I}$ is the $3$-path ideal of a graph.

\bibliography{ref.bib}
\bibliographystyle{plain}

\vspace{0.5cm}
\end{document}